\newcommand{\field}[1]{\mathbb{#1}}
\newcommand{\Z}{\field{Z}}
\newcommand{\C}{\field{C}}
\newcommand{\Q}{\field{Q}}
\newcommand{\sgn}{\operatorname{sgn}}
\newcommand{\SL}{\operatorname{SL}}
\newcommand{\im}{\text{Im}}
\newcommand{\wt}{\kappa}
\newcommand{\e}{e}
\renewcommand{\epsilon}{\varepsilon}
\renewcommand{\theta}{\vartheta}
\renewcommand{\rho}{\varrho}
\renewcommand{\phi}{\varphi}
\numberwithin{equation}{section}
\newtheorem{theorem}{\textbf{Theorem}}
\numberwithin{theorem}{section}
\numberwithin{conjecture}{section}
\newtheorem{lemma}[theorem]{\textbf{Lemma}}
\newtheorem{proposition}[theorem]{\textbf{Proposition}}
\newtheorem{corollary}[theorem]{\textbf{Corollary}}
\theoremstyle{remark}
\newtheorem{remark}[theorem]{\bf Remark}
\newtheorem*{remark*}{\bf Remark}
\newtheorem*{remarks*}{\bf Remarks}
\newtheorem*{question*}{{\bf Question}}
\newtheorem*{example*}{\bf Example}
\newcommand{\smat}[4]{\bigl(\begin{smallmatrix} #1 &  #2 \\ #3 & #4 \end{smallmatrix} \bigr)}
\newcommand{\call}{\mathcal}
\newcommand{\rl}[1]{\left( #1 \right)}
\newcommand*\diff{\mathop{}\!\mathrm{d}}
\newcommand{\cQ}{\call{Q}}
\journal{ }
\begin{document}

\begin{frontmatter}



\title{\textsc{A family of mock theta functions of weights $1/2$ and $3/2$ and their congruence properties}}


\author{\textsc{Ren\'e Olivetto}}

\address{Mathematical Institute \\
University of Cologne \\
Gyrhofstr. 8b,  
50931 Cologne (Germany)}

 \ead{rolivett@math.uni-koeln.de}

\begin{abstract}
In a private communication, K. Ono conjectured that any  mock theta function of weight 1/2 or 3/2 can be congruent modulo a prime $p$ to a weakly holomorphic modular form for just a few values of $p$. In this paper we describe when such a congruence occurs. More precisely we show that it depends on the $p$-adic valuation of the mock theta function itself. In order to do so, we construct a family of mock theta functions in terms of derivatives of the Appell sum, which have a special Fourier expansion at infinity.
\end{abstract}

\begin{keyword} Congruences \sep Mock theta functions 


\MSC[2010] 11F33 \sep 11F37
\end{keyword}

\end{frontmatter}


\section{\textsc{Introduction and Statement of  results}}
\label{Intro}

Basic properties of the $q$-Pochhammer symbol $(x;q)_n:=(1-x)(1-xq)(1-xq^2)\cdots(1-xq^{n-1})$ imply that 
\begin{equation} \label{eq:RamCong}
f(q):=1+\sum_{n\geq 1}\frac{q^{n^2}}{(-q;q)_n^2} \equiv 1+\sum_{n\geq 1}\frac{q^{n^2}}{(q;q)_n^2}=:P(q) \pmod{4},
\end{equation}
where a congruence between two power series is meant coefficient-wise (see Section \ref{Cong} for a precise definition). 
The right hand side of \eqref{eq:RamCong} coincides with
the generating function for the partition function $p(n)$, i.e.,
\[
P(q)=1+\sum_{n\geq 1} p(n)q^n.
\]
We recall that a \emph{partition} of a non-negative integer $n$ is any non-increasing sequence of positive integers whose sum is $n$, and, as usual, $p(n)$ denotes the number of partitions of $n$. 
Some of the most interesting arithmetic properties of $p(n)$ are Ramanujan's linear congruences 
\begin{align*}
p(5n+4)& \equiv 0 \pmod{5}, \\
p(7n+5)& \equiv 0 \pmod{7}, \\
p(11n+6)& \equiv 0 \pmod{11},
\end{align*}
and their prime power extensions proved by A. O. L. Atkin \cite{At1,At2}, S. Ramanujan \cite{R}, and G. N. Watson \cite{W}.
More recently,  S. Ahlgren and K. Ono \cite{AO1,A,O3} proved that linear congruences for p(n) exist for all moduli coprime to $6$. For further examples, see \cite{O2}, and the references therein. All these results rely on the modular transformation properties of the generating function $P(q)$.
Indeed, it is well known that $P(q)$ coincide (up to $q$-powers) with the reciprocal of the Dedekind eta-function,
\[
\eta(\tau):=q^{\frac{1}{24}}\prod_{n\geq 1} (1-q^n) , \quad \quad q:=e^{2\pi i \tau}.
\]
In particular, the function $P$ is a \emph{weakly holomorphic modular form}, i.e., a meromorphic modular form whose poles (if any) are supported at cusps. 
Using similar methods of S. Ahlgren and K. Ono, S. Treneer generalized their result to any weakly holomorphic modular form \cite{Tr}.

Coming back to \eqref{eq:RamCong},  we consider now the left hand side. 
In his last letter to Hardy dated 1920, Ramanujan listed $17$ hypergeometric series, which he called \emph{mock theta functions}, describing some of their basic properties, but without giving any precise definition. The function $f(q)$, i.e., the left hand side of \eqref{eq:RamCong}, is one of these mysterious functions.
After more than 80 years from this letter, a breakthrough was made by S. Zwegers in his 2002 Ph.D. thesis \cite{Zw}, where he characterized these special functions in three different ways, namely, in terms of Appell sums (see Subsection \ref{ss:Appell}), as Fourier coefficients of meromorphic Jacobi forms, and as quotients of indefinite binary theta series by unary theta series. For a more extensive description of mock theta functions and a survey of their  beautiful story  we refer the reader to \cite{Za,O1}.
In order to define a mock theta function, for
 any $\frac{A}{B} \in \Q$ and $\epsilon \in \{ 0,1 \}$, consider the unary theta function of weight $\frac12 + \wt \in \left\{ \frac12 ,\frac32 \right\}$ 
\[
\Theta_{\frac{A}{B},\epsilon,\wt}(\tau):=(B^*)^\wt \sum_{n\in \frac{A}{B}+\Z} (-1)^{\epsilon n}n^\wt q^{n^2},
\]
where $B^*:=B\rl{\frac12 + \left\{  \frac{B}{2} \right\}}$ and $\{ x \}$ denotes the fractional part of $x \in \Q$.
The pre image of $\Theta_{\alpha,\epsilon,\wt}$ under the operator $\xi_{\frac32 - \wt } := 2i y^{\frac32 - \wt} \overline{\frac{\partial}{\partial \overline{\tau}}}$
 yields the non-holomorphic theta function
\begin{equation} \label{eq:100}
R_{\alpha,\epsilon,\wt}(\tau):= (B^*)^{1-\wt}  \sum_{n\in \alpha +\Z} \frac{(-1)^{\epsilon n}n^\wt }{|n|^{2\wt -1}} \beta_\wt \rl{4n^2y} q^{-n^2},
\end{equation}
where $y=\im (\tau)$ and $\beta_\wt (t):=\int_t^\infty u^{\wt -\frac32} \e^{-\pi u } \diff u$. 
Following Zagier \cite{Za}, we define  a \emph{mock theta function} of weight $\frac32-\wt$ as a $q$-series $H(q)=\sum_{n\geq 0} a(n)q^n$ such that there exists a rational number $\lambda$ and a unary theta function $\Theta_{\frac{A}{B},\epsilon,\wt}$, such that
$q^\lambda H(q)+R_{\frac{A}{B},\epsilon,\wt}(B\tau)$ is a non-holomorphic modular form of weight $\frac32-\wt$ for a congruence subgroup of $\SL_2(\Z)$. We will refer to the theta function $\Theta_{\frac{A}{B},\epsilon,\wt}$ as the \emph{shadow} of $H$. The function $f(q)$ defined above is a mock theta function of weight $\frac12$ with shadow 
$\Theta_{\frac16,0,1}$. As well as classical modular forms, the Fourier coefficients of mock theta functions often have an interesting combinatorial interpretation. 
Dyson's rank generating function, characters associated to certain Lie superalgebras, and Hurwitz' class number generating function  are  examples of mock modular forms, to mention a few. 
During the last decade results about linear congruences for mock theta functions has been studied in certain special cases 
\cite{Alf2,BrO}. Among others, we point out the remarkable result of Bringmann and Ono \cite{BO} concerning the congruence properties of Dyson's rank generating function. Such identities rely on linear relations between the non-holomorphic parts. To be more precise,  applying certain quadratic twists to  mock theta functions,
one obtains weakly holomorphic modular forms due to the cancelation of the non-holomorphic parts.
In \cite{And} N. Andersen proved that any linear congruence for the coefficients of $f(q)$
and $\omega(q)$ must come in this way. 

A natural question arises. Is Anderson's result true for any mock theta function? In other words, does the obstruction to modularity dictate an obstruction 
to congruence properties? 
In light of \eqref{eq:RamCong}, the aim of this paper is to understand whether this is just a rare example, or a more general result concerning congruences between mock theta functions and weakly holomorphic modular forms. 
\begin{remark}
As in the case of holomorphic modular forms for congruence subgroup, we identify a mock theta function with its $q$-expansion at infinity.
\end{remark}
In a private conversation, Ono  conjectured that congruences as \eqref{eq:RamCong} exist just in special cases.

\begin{question*}[Ono] \label{conj}
Suppose that  $H$ is a mock modular form with (algebraic) integer coefficients. As a function of its weight and level, can one bound the largest integer $m$ for which there is a weakly holomorphic modular form $g$ with (algebraic) integer coefficients for which 
$H\equiv g \pmod{m}$?
\end{question*} 
\begin{example*}
For Ramanujan's third order mock theta function $f$, is $m=4$?
\end{example*}
In order to answer this question, we construct a family of mock theta functions, one for each shadow $\Theta_{\alpha,\epsilon,\wt}$, 
whose Fourier coefficients at infinity have a particularly nice shape. 
We can therefore reduce to study congruence properties of these particular functions. To state the result we refer to \eqref{eq:Ek} for the definition of the weight $2$ Eisenstein series $E_2$.
\begin{theorem} \label{thm1}
Let $\wt$, $\epsilon \in \{ 0 ,1\}$ and $A$ and $B$ be positive coprime integers, then the function
\[
f_{\frac{A}{B},\epsilon,\wt}(\tau):= \frac{B^*}{\Theta_{\frac{A}{B},\epsilon,\wt}(B\tau) } \rl{\sum_{n\in \Z\setminus \{ 0 \}} \frac{(-1)^{\epsilon n} n q^{\frac{Bn^2+2An}{2}}}{1-q^{Bn}} + (-1)^{\wt} \sum_{n\in \Z} \frac{(-1)^{\epsilon n}\rl{n-\frac{2A}{B}}  q^{\frac{Bn^2-2An}{2}}   }{1-q^{Bn-2A}} +\frac{1}{12B }E_2(\tau)}
\]
is a mock theta function of weight $\frac32-\wt$ and shadow $\Theta_{\frac{A}{B},\epsilon,\wt}$. 
\end{theorem}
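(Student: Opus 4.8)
The plan is to realize the bracketed expression as the holomorphic part of a specialization of a derivative of Zwegers' Appell--Lerch sum, and then to transport Zwegers' modular completion of that sum through the specialization. Concretely, I would work with the two-variable Appell sum
\[
\mu(u,v;\tau)=\frac{\e^{\pi i u}}{\theta(v;\tau)}\sum_{n\in\Z}\frac{(-1)^{n}\,\e^{\pi i(n^{2}+n)\tau}\,\e^{2\pi i nv}}{1-\e^{2\pi i u}q^{n}},
\]
and with Zwegers' theorem that the completion $\widehat{\mu}:=\mu+\tfrac{i}{2}R$, where $R$ is built from the same incomplete-Gamma factors $\beta_{\wt}$ occurring in \eqref{eq:100}, is a non-holomorphic Jacobi form of weight $\tfrac12$ satisfying prescribed elliptic and modular transformation laws.

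Next I would apply $\tau\mapsto B\tau$ and the single derivative $\tfrac{1}{2\pi i}\partial_{v}$ to $\widehat\mu$, and then specialize the elliptic variables to the torsion points dictated by $A/B$ and $\epsilon$. The derivative is what produces the linear factors $n$ and $n-\tfrac{2A}{B}$ in the two numerators, while evaluating $\e^{2\pi i nv}$ at a half-integral characteristic produces the sign $(-1)^{\epsilon n}$; specializing $u$ collapses the geometric-series denominators $1-\e^{2\pi i u}q^{Bn}$ to the shapes $1-q^{Bn}$ and $1-q^{Bn-2A}$ of the two sums, which arise from $\mu$ at a torsion point and at its reflection $n\mapsto-n$ (a symmetrization $\mu(u,v)\pm\mu(-u,-v)$), the relative sign $(-1)^{\wt}$ recording the reflection together with the parity of the weight. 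The Eisenstein term $\tfrac{1}{12B}E_{2}(\tau)$ is precisely the correction forced by this procedure: differentiating the theta quotient in $\mu$ introduces the logarithmic derivative $\theta'/\theta$, whose constant term is governed by $E_{2}$ (equivalently, the excluded $n=0$ term carries a pole whose finite part is this quasimodular piece), and the quasimodular anomaly $E_{2}(\tau)-\tfrac{3}{\pi y}$ is exactly what is needed to absorb the defect coming from evaluating a Jacobi form at a $\tau$-dependent torsion point.

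Once the holomorphic part is matched in this way, the non-holomorphic completion is carried along for free: the term $\tfrac{i}{2}R$ of $\widehat\mu$, after the same derivative and specialization, yields precisely the Eichler integral $R_{\frac{A}{B},\epsilon,\wt}(B\tau)$ of \eqref{eq:100}, up to the normalizing factor $B^{*}/\Theta_{\frac{A}{B},\epsilon,\wt}(B\tau)$, since the $\beta_{\wt}$ in \eqref{eq:100} are the incomplete-Gamma factors appearing in $R$. Dividing the completed weight-$2$ bracket by the holomorphic modular form $\Theta_{\frac{A}{B},\epsilon,\wt}(B\tau)$ of weight $\tfrac12+\wt$ then gives a non-holomorphic modular form of weight $\tfrac32-\wt$ for a congruence subgroup of $\SL_{2}(\Z)$, whose holomorphic part is $f_{\frac{A}{B},\epsilon,\wt}$ and whose non-holomorphic part is $R_{\frac{A}{B},\epsilon,\wt}(B\tau)$. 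Applying $\xi_{\frac32-\wt}$ and invoking that $\Theta_{\frac{A}{B},\epsilon,\wt}$ is the pre-image of the shadow under $\xi_{\frac32-\wt}$, as recorded in \eqref{eq:100}, identifies the shadow as $\Theta_{\frac{A}{B},\epsilon,\wt}$ and supplies the rational number $\lambda$.

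The main obstacle is the torsion specialization together with the $E_{2}$ regularization: one must check that, after fixing $u$ and $v$ at the required torsion points, the elliptic automorphy factors of $\widehat\mu$ (Gaussian and linear exponentials in $u,v$) combine with the quasimodular anomaly of $E_{2}$ to reproduce exactly the weight $\tfrac32-\wt$ automorphy factor after division by $\Theta_{\frac{A}{B},\epsilon,\wt}$. This is a delicate bookkeeping of the constants $B^{*}$, $\tfrac{1}{12B}$ and the sign $(-1)^{\wt}$, and of the two cases $\wt\in\{0,1\}$, in which the identification of the shadow power $n^{\wt}$ differs. The remaining points---that the bracket expands as the asserted $q$-series and that the coefficients are algebraic integers---are routine manipulations of the geometric series.
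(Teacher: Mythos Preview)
Your strategy is essentially the paper's: rewrite the bracketed quantity in terms of the Appell sum specialized and differentiated at $\tau$-dependent torsion points, replace $\mathcal{A}$ by Zwegers' completion $\widehat{\mathcal{A}}$, verify weight-$2$ modularity directly from Proposition~\ref{pro:AppTran}, and then read off the non-holomorphic part as $R_{\frac{A}{B},\epsilon,\wt}$ from the $\theta R$ piece of $\widehat{\mathcal{A}}$ using the parity of $\theta$ and $R$.

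Two details of your sketch are not quite aligned with how the computation actually runs. First, the coefficient $n-\tfrac{2A}{B}$ in the second sum is not produced by a single $\partial_v$; the paper splits that sum into a $\partial_v$-term and a separate undifferentiated Appell term with coefficient $(-1)^{\wt}\tfrac{B-2A}{B}$ (see \eqref{eq:AppId}). This third, non-derivative piece is not optional: under the modular transformation the second $\partial_v$-term, evaluated at the $\tau$-dependent point $u=2s\tau$, throws off extra Appell contributions via the elliptic law, and it is precisely this third term that absorbs them. Second, in the paper's setup (working with $\mathcal{A}$, not $\mu$) the $E_2$ correction does not arise from a $\theta'/\theta$; it arises from the simple pole of $\mathcal{A}$ at $u=0$: differentiating in $v$ after applying the Jacobi automorphy factor $\e^{\pi i c(2uv-u^{2})/(c\tau+d)}$ and then sending $u\to 0$ produces a residue $-\tfrac{c(c\tau+d)}{2\pi i B}$, which is exactly cancelled by the anomaly of $\tfrac{1}{12B}E_2$. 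Once you make these two adjustments, your outline is the paper's proof.
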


\begin{remarks*}
\begin{enumerate}[(i)]
\item
The function $f_{\frac{A}{B},\epsilon,\wt}\Theta_{\frac{A}{B},\epsilon,\wt}-\frac{1}{12 B}E_2(\tau)$ turn out to be the weight $2$ holomorphic projection of $\Theta_{\frac{A}{B},\epsilon,\wt}R_{\frac{A}{B},\epsilon,\wt}$. 
The holomorphic projection operator sends real analytic functions, with reasonable growth, that transform as modular forms   to (almost) holomorphic modular forms. This operator was  introduced by J. Sturm \cite{St}. 
In \cite{GZ}, B. Gross and D. Zagier show that  if the weight of the modular  transformation property is greater than $2$ then the image under the holomorphic projection is modular. If the weight is $2$ then they show that, under certain assumptions on the Fourier expansion at the cusps,  the image is modular up to the addition of a constant multiple of $E_2$.
Recently, the holomorphic projection has been used by Imamoglu-Raum-Richter \cite{IRR} in order to determine recursion formulas for the Fourier coefficients of Ramanujan's mock theta functions.

\item Unlike for weights larger than $2$, the weight $2$ holomorphic projection operator does not interchange with the slash operator. 
In particular, it is not trivial to understand the modularity property of the projection of $R_{\frac{A}{B},\epsilon,\wt}\Theta_{\frac{A}{B},\epsilon,\wt}$.
Our method gives an alternative to this issue.

\item 
The approach of Imamoglu-Raum-Richter shows that the appearance of $E_2$ in the image of the weight $2$ holomorphic projection depends on the representation associated to the transformation property of the original function. Theorem \ref{thm1} imply that a trivial irreducible  representation always appear in the decomposition into irreducible of the representation associated to mock theta functions.

\item
In Proposition \ref{pro:minimum} we will see another interesting shape for this object, which, among other applications, explains the well known Hurwitz class number relations.

\item 
Let $\eta$ be Dedekind's eta-function, and $(\eta^3)^*$ be the pre-image of $\eta^3$ under $\xi_\frac12$.
In \cite{ARZ} G. Andrews, R. Rhoades, and S. Zwegers express the holomorphic projection of $\eta \cdot (\eta^3)^*$ as a derivative of the Appell sum. 
\end{enumerate}
\end{remarks*}

Using a result of S. Treneer on congruence properties of modular forms (see Subsection \ref{ss:SerreTreneer}), we prove the following.
\begin{theorem} \label{thm2}
For any weakly holomorphic modular form $g$ of level $N$ and for any prime  $p\not| N$,
\[
f_{\frac{A}{B},\epsilon,\wt}(\tau)\Theta_{\frac{A}{B},\epsilon,\wt}(B\tau) \not\equiv g(\tau) \pmod{p}.
\]
\end{theorem}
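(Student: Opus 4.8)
The plan is to clear the denominator and to exploit the Eisenstein term. Multiplying the definition of $f_{\frac{A}{B},\epsilon,\wt}$ by $\Theta_{\frac{A}{B},\epsilon,\wt}(B\tau)$, set
\[
F(\tau):=f_{\frac{A}{B},\epsilon,\wt}(\tau)\,\Theta_{\frac{A}{B},\epsilon,\wt}(B\tau)=B^*\rl{S_1(\tau)+S_2(\tau)+\tfrac{1}{12B}E_2(\tau)},
\]
where $S_1,S_2$ denote the two Lambert-type sums appearing in Theorem~\ref{thm1}. Writing $E_2=1-24\sum_{n\ge1}\sigma_1(n)q^n$ with $\sigma_1(n)=\sum_{d\mid n}d$, the Eisenstein term contributes $-\mu\,\sigma_1(n)$ to the $n$-th Fourier coefficient of $F$, where $\mu:=\tfrac{2B^*}{B}=1+2\{B/2\}\in\{1,2\}$. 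For any prime $p\ge5$ this factor is a unit modulo $p$, so the presence of $E_2$ forces the divisor function $\sigma_1$ into the coefficients of $F$ in an essential way; this is the feature that will obstruct modularity. One first checks that $F$ is a weakly holomorphic $q$-series with $p$-integral coefficients, so that reduction modulo $p$ makes sense.

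Next I would argue by contradiction. Suppose $F\equiv g\pmod p$ for some weakly holomorphic modular form $g$ of weight $\kappa_0$ and level $N$ with $p\nmid N$ and $p$-integral coefficients. Applying Treneer's theorem \cite{Tr} (recalled in Subsection~\ref{ss:SerreTreneer}) to $g$ yields, for a positive proportion of primes $Q$ in a suitable congruence class modulo $Np$, a Hecke-type vanishing of the coefficients of $g$ along an arithmetic progression: the coefficient of $q^{Q^r m}$ in $g$ is $\equiv0\pmod p$ for all $m$ coprime to $Qp$, with $r\in\{1,2,3\}$ the appropriate exponent for the weight of $g$. Since $F\equiv g\pmod p$, the same congruences must hold for the coefficients of $F$.

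I would then contradict this by reading off the coefficient of $q^{Q^r m}$ in $F$ directly from the explicit expansion. Its Eisenstein part equals $-\mu\,\sigma_1(Q^r m)\pmod p$, while the contribution of $S_1+S_2$ is a sign-twisted divisor sum $\sum(-1)^{\epsilon n}n$ extended over the factorizations $2Q^r m=n(Bn'+2A)$ forced by the quadratic exponents $\tfrac{Bn^2\pm2An}{2}$ of the two Lambert series. The key point is that these two pieces have incompatible arithmetic: the exponents of $S_1+S_2$ impose the congruence $2Q^r m/n\equiv2A\pmod B$ together with the inequality $n'\ge n$ on the admissible divisors, so the theta contribution to $q^{Q^r m}$ is supported on a thin set and cannot track $\sigma_1$ along a full progression. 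Choosing $Q$ inside the intersection of Treneer's positive-density set with the (density one) set of primes for which $\sigma_1(Q^r m)\not\equiv0\pmod p$ for some admissible $m$ lying outside the support of $S_1+S_2$, one finds that the coefficient of $q^{Q^r m}$ in $F$ is a nonzero multiple of $\sigma_1(Q^r m)$ modulo $p$, contradicting its vanishing.

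The main obstacle is precisely this last step: ruling out a conspiracy in which the twisted divisor sums coming from $S_1+S_2$ exactly cancel the Eisenstein contribution $-\mu\,\sigma_1$ modulo $p$ along the \emph{entire} Treneer progression. Handling it requires a careful analysis of the coefficients of $S_1+S_2$—namely that they are divisor sums restricted by the congruence $2Q^r m/n\equiv2A\pmod B$ and by the inequality $n'\ge n$—and a demonstration that such a restricted, sign-twisted sum cannot agree with a nonzero multiple of $\sigma_1$ on an arithmetic progression of the shape prescribed by \cite{Tr}. Everything else (the $p$-integrality of $F$, the identification of the Eisenstein contribution $-\mu\,\sigma_1$, and the invocation of Treneer's congruences) is routine once the Fourier expansion of Theorem~\ref{thm1} is in hand.
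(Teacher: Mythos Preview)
Your global plan---assume $F:=f_{\frac{A}{B},\epsilon,\wt}\Theta_{\frac{A}{B},\epsilon,\wt}\equiv g\pmod p$, apply Treneer to $g$, and contradict the resulting coefficient vanishing by an explicit computation for $F$---is exactly the paper's strategy. The gap is in the last step, and it is not merely that the ``conspiracy'' is hard to rule out: your chosen mechanism for the contradiction is the wrong one.

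Treneer's primes (Proposition~\ref{pro:Treneer}) are required to satisfy $Q\equiv -1\pmod{pN}$, and the shape of the vanishing is $c(Qp^m n)\equiv 0\pmod p$ for $(n,Qp)=1$, not $c(Q^r m)$ with $r\in\{1,2,3\}$. Since $(Q,p^mn)=1$ and $Q\equiv -1\pmod p$, multiplicativity gives $\sigma_1(Qp^m n)=(Q+1)\,\sigma_1(p^m n)\equiv 0\pmod p$. Hence the Eisenstein contribution $-\mu\,\sigma_1$ that you want to exploit \emph{vanishes identically} along the entire Treneer progression; the intersection you describe (Treneer's set with the set of $Q$ for which the Eisenstein piece survives) is empty. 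The obstruction to modularity must therefore come from the Lambert sums $S_1+S_2$, not from $E_2$.

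This is precisely what the paper does. Using Proposition~\ref{pro:minimum} it rewrites the $n$-th coefficient of $F$ as $\frac{B^*}{B}\bigl(\sum_{(a,b)\in V_n}\sgn(b^2-B^2a^2)^{\wt}(-1)^{\epsilon a}\min\{|b|,|Ba|\}+2\sigma(n/2)\bigr)$, discards the $\sigma$-term along the progression for the reason above, and then proves (Proposition~\ref{pro:main}) by a case analysis in $B\bmod 4$ and in the values of a small counting function $\Psi_n$ that for every large $Q\equiv -1\pmod p$ some coefficient $c(Q p^m k)$ with $(k,Qp)=1$ is a $p$-adic unit. That case analysis---not an appeal to $\sigma_1$---is the actual content of the proof; once it is in place, Theorem~\ref{thm2} follows in two lines exactly as you outlined.
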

Two mock theta functions with the same shadow differ by a weakly holomorphic modular form, therefore we expect Theorem \ref{thm2} to hold for any mock theta function. Although equation \eqref{eq:RamCong} gives a contradiction, its nature  has nothing to do with the exceptional cases excluded in Theorem \ref{thm2}, but it relies on the $p$-adic properties of $f(q)$. We describe more generally this phenomenon in the following result.


\begin{corollary} \label{cor1}
Let $H$ be a mock theta function with associated non-holomorphic part $R_{\frac{A}{B},\epsilon,\wt}$. Letting $p$ be a prime number and $j:=\nu_p(H)$ be the $p$-adic valuation of $H$, then the following is true.
\begin{enumerate}[(i)]
\item If $j<0$ then there exists a weakly holomorphic modular form $g$ of weight $\frac12$ such that $p^{-j}H(\tau)\equiv g(\tau) \pmod{p^{-j}}$. 
\item If $p^{-j}H(\tau)\equiv g(\tau) \pmod{p^\ell}$ for a weakly holomorphic modular form $g$ and an integer $\ell>0$, then either $j\leq -\ell$ or $p\mid N$, where $N$ is the level of $f_{\frac{A}{B},\epsilon,\wt}-H+pg$.
\end{enumerate}
\end{corollary}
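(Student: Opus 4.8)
The plan is to reduce everything to the structural fact, recorded just after Theorem \ref{thm2}, that two mock theta functions with the same shadow differ by a weakly holomorphic modular form. Write $f:=f_{\frac{A}{B},\epsilon,\wt}$, $\Theta:=\Theta_{\frac{A}{B},\epsilon,\wt}(B\tau)$, and set $h:=H-f$; then $H=f+h$ with $h$ weakly holomorphic modular of weight $\frac32-\wt$. Two facts about $f$ and $\Theta$ will be used repeatedly: their Fourier coefficients are $p$-integral for $p\nmid N$, and, applying Theorem \ref{thm2} to the zero form, $f\Theta\not\equiv 0\pmod p$; together these give $\nu_p(f\Theta)=0$ whenever $p\nmid N$.

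For part (i), assume $j<0$. Since $\nu_p(f)\geq 0$, the $p$-adically polar part of $H$ is carried entirely by $h$, so I set $g:=p^{-j}h$, a weakly holomorphic modular form of weight $\frac32-\wt$ (equal to $\frac12$ when $\wt=1$). Then
\[
p^{-j}H-g=p^{-j}f,\qquad \nu_p\!\left(p^{-j}f\right)=-j+\nu_p(f)\geq -j,
\]
so $p^{-j}H\equiv g\pmod{p^{-j}}$, which is the assertion.

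For part (ii) I argue by contradiction: suppose $p^{-j}H\equiv g\pmod{p^\ell}$ while $j>-\ell$ and $p\nmid N$, where $N$ is the level of $w:=f-H+pg=pg-h$, and split on the sign of $j$. If $j\geq 0$, multiplying the hypothesis by $p^{j}$ gives $H\equiv p^{j}g\pmod{p^{\ell+j}}$, hence $H-p^{j}g\equiv 0\pmod p$; writing $f=(f-H+p^{j}g)+(H-p^{j}g)$ yields $f\equiv f-H+p^{j}g\pmod p$. The form on the right differs from $w$ only by the scalar multiple $(p^{j}-p)g$, so it is modular on the same group and has level $N$; multiplying by $\Theta$ exhibits $f\Theta$ as congruent modulo $p$ to a weakly holomorphic modular form of level $N$, contradicting Theorem \ref{thm2} because $p\nmid N$, and forcing $p\mid N$. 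If instead $-\ell<j<0$, I compare valuations: multiplying the hypothesis by $\Theta$ and using $H=f+h$ gives
\[
p^{-j}f\Theta\equiv\left(g-p^{-j}h\right)\Theta\pmod{p^{\ell}} .
\]
Since $\nu_p(f\Theta)=0$ we have $\nu_p(p^{-j}f\Theta)=-j<\ell$, so reducing modulo $p^{-j}$ (which divides $p^\ell$) and substituting $h=pg-w$ leaves $(1-p^{1-j})g\Theta\equiv 0\pmod{p^{-j}}$; as $1-p^{1-j}$ is a $p$-adic unit and $\nu_p(\Theta)=0$ this gives $g\equiv 0\pmod p$, contradicting $g\equiv p^{-j}H\not\equiv 0\pmod p$ (valid since $\nu_p(p^{-j}H)=0$). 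Thus this regime cannot occur, i.e. $j\leq-\ell$, completing the case analysis.

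The main obstacle is the bookkeeping of levels and $p$-integrality concealed in the condition $p\nmid N$. One must (a) pin down $\nu_p(f\Theta)=0$, which rests on the $p$-integrality of $f$ and $\Theta$ together with the zero-form instance of Theorem \ref{thm2}, and (b) ensure that every comparison form produced above — the shifted form $f-H+p^{j}g$ for varying $j$ and its theta twist — is modular on the single group $\Gamma_0(N)$, so that $p\nmid N$ genuinely places us within the scope of Theorem \ref{thm2}. The clean point that makes this go through is that replacing $pg$ by $p^{j}g$ alters $w$ only by a scalar multiple of $g$ and hence not its level; isolating the primes at which this integrality or level control fails is precisely the content of the exceptional alternative $p\mid N$.
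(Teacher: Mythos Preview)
Your treatment of part (i) and of the case $j\geq 0$ in part (ii) is essentially the paper's argument. The problem is your separate handling of the regime $-\ell<j<0$.

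In that case you reduce $p^{-j}f\Theta\equiv(g-p^{-j}h)\Theta\pmod{p^\ell}$ modulo $p^{-j}$, substitute $h=pg-w$, and then assert $(1-p^{1-j})g\Theta\equiv 0\pmod{p^{-j}}$. This silently drops the term $p^{-j}w\Theta$, which does \emph{not} vanish modulo $p^{-j}$: since $w=f-H+pg$ with $\nu_p(f)\geq 0$, $\nu_p(pg)=1$ and $\nu_p(H)=j<0$, the valuation of $w$ is exactly $j$, so $\nu_p\bigl(p^{-j}w\Theta\bigr)=0<-j$. The conclusion $g\equiv 0\pmod p$ is therefore unfounded. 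A second red flag is that your argument in this regime never invokes Theorem~\ref{thm2} or the hypothesis $p\nmid N$; if it were valid it would show that $-\ell<j<0$ is outright impossible, which is stronger than the dichotomy the corollary actually asserts.

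The fix is to avoid the case split altogether, as the paper does. From $\nu_p(p^{-j}H-g)\geq\ell$ one gets $\nu_p\bigl(f-(p^{j}g-h)\bigr)=\nu_p(H-p^{j}g)\geq j+\ell\geq 1$ for every $j>-\ell$; moreover $\nu_p(p^{j}g-h)=0$ since it equals $f-(H-p^{j}g)$ with $\nu_p(f)=0$ and the second term of valuation $\geq 1$. Hence $f\Theta$ is congruent modulo $p$ to the $p$-integral weakly holomorphic form $(p^{j}g-h)\Theta$, and Theorem~\ref{thm2} forces $p\mid N$. This single computation covers both signs of $j$.
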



The remainder of the paper is organized as follows. In Section 2 we recall some basic arithmetic properties of weakly holomorphic modular forms and we describe the Appell sum. In Section 3  we construct a family of mock theta functions, proving Theorem \ref{thm1}. In Section 4 we prove the arithmetic properties of the mock theta functions described in Theorem \ref{thm1}, and in Section 5 we use them  to prove Theorem \ref{thm2} and Corollary \ref{cor1}.

\section{\textsc{Preliminaries}}
\label{Pre}

In this section we recall certain arithmetic results concerning weakly holomorphic modular forms and the Eisenstein series $E_2$. Finally, we recall the definition  and we describe the transformation properties of  the Appell sum.

\subsection{\textbf{\textsc{Arithmetic properties of weakly holomorphic modular forms and $E_2$}}} \label{ss:SerreTreneer}

Arithmetic properties of weakly holomorphic modular forms have been described by Treneer \cite{Tr}. Briefly speaking, for any weakly holomorphic modular form $g$ of level $N$ and for any prime $p$ coprime with $N$, Treneer constructs a cusp form which is congruent modulo $p$ to $g$, after sieving the coefficients. 
She then uses the following result of Serre  \cite[Exercise 6.4]{S2} in order to establish congruences for $g$. To be precise, 
Serre showed that  any cusp form of integral weight  $k>1$ is annihilated modulo any prime $p$ by the $Q$th Hecke operator, for a positive proportion of the primes $Q$. This result immediately implies the following. 
\begin{proposition}[Serre] \label{pro:CuspSerre}
Suppose that 
\[
f(\tau)=\sum_{n\geq 0} c(n) q^{n} 
\]
is a cusp form of weight $k\geq 1$ and level $N$. Then for each prime $p$ a positive proportion of the primes $Q\ne p$, $Q\equiv -1 \pmod{pN}$ have the property that 
\[
c(Qn)\equiv 0 \pmod{p},
\] 
for any integer $n$.
\end{proposition}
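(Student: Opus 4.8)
The plan is to deduce the proposition from the cited theorem of Serre --- that for each prime $p$ a positive proportion of primes $Q$ satisfy $f\mid T_Q\equiv 0\pmod p$ --- by additionally securing the congruence $Q\equiv -1\pmod{pN}$ and then extracting Fourier coefficients. Recall that for a prime $Q\nmid N$ the Hecke operator acts on $f=\sum_{n\geq 0}c(n)q^n$ of weight $k$ and character $\chi$ by
\[
f\mid T_Q=\sum_{n\geq 0}\rl{c(Qn)+\chi(Q)Q^{k-1}c(n/Q)}q^n,
\]
with the convention that $c(n/Q)=0$ when $Q\nmid n$. Thus a single congruence $f\mid T_Q\equiv 0\pmod p$ already encodes $c(Qn)\equiv 0\pmod p$ for every $n$ prime to $Q$, the term $\chi(Q)Q^{k-1}c(n/Q)$ being absent there. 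The entire content of the proposition therefore lies in producing the annihilating primes \emph{inside} the arithmetic progression $Q\equiv -1\pmod{pN}$, and it is here that I would look more closely at the mechanism behind Serre's theorem.

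First I would reduce to a normalized Hecke eigenform: after enlarging the field of coefficients to a number field $K$ and fixing a prime $\mathfrak p\mid p$ of $\mathcal O_K$, write $f$ over $\overline\Q$ as a combination $\sum_i c_i g_i$ of eigenforms of level dividing $N$, so that $f\mid T_Q=\sum_i c_i\,a_Q(g_i)\,g_i$ and it suffices to annihilate every eigenvalue $a_Q(g_i)$ modulo $\mathfrak p$ at once. To each $g_i$ one attaches, by Deligne (and by Deligne--Serre in the weight-one case), a continuous representation $\overline\rho_i\colon\operatorname{Gal}(\overline\Q/\Q)\to\operatorname{GL}_2(\overline{\mathbb F}_p)$, unramified outside $Np$, with $\operatorname{tr}\overline\rho_i(\operatorname{Frob}_Q)\equiv a_Q(g_i)$ and $\det\overline\rho_i(\operatorname{Frob}_Q)\equiv\chi_i(Q)Q^{k-1}\pmod{\mathfrak p}$ for all $Q\nmid Np$. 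The decisive structural fact is that each $\overline\rho_i$ has finite image, so that $a_Q(g_i)\bmod\mathfrak p$ is a class function of $\operatorname{Frob}_Q$ on a fixed finite Galois group, and annihilation modulo $\mathfrak p$ amounts to $\operatorname{Frob}_Q$ lying in a trace-zero class.

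The key --- and the step I expect to be the crux --- is to exhibit one Frobenius class that is trace-zero for all the $\overline\rho_i$ simultaneously and that also falls into the class $-1$ of the cyclotomic field. This is supplied by the oddness of the $\overline\rho_i$: complex conjugation $c$ has $\det\overline\rho_i(c)=-1$, hence eigenvalues $\{1,-1\}$ and $\operatorname{tr}\overline\rho_i(c)=0$. Taking $L$ to be the compositum of the fixed fields of the $\overline\rho_i$ with $\Q(\zeta_{pN})$ and letting $c\in\operatorname{Gal}(L/\Q)$ be complex conjugation, the restriction of $c$ to $\Q(\zeta_{pN})$ is $\zeta\mapsto\zeta^{-1}$, i.e. the residue class $-1$ in $(\Z/pN)^\times$; by the Chebotarev density theorem the primes $Q$ with $\operatorname{Frob}_Q$ conjugate to $c$ form a set of positive density, and each such $Q\ne p$ satisfies both $f\mid T_Q\equiv 0\pmod{\mathfrak p}$ and $Q\equiv -1\pmod{pN}$. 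Since $f$ has rational integer coefficients the first congruence descends to one modulo $p$, and extracting the $n$-th coefficient for $n$ prime to $Q$ gives $c(Qn)\equiv 0\pmod p$, as claimed; the parity relation $\chi(-1)=(-1)^k$ (forced, else the space is zero) makes the determinant identity $\det\overline\rho_i(c)=\chi(-1)(-1)^{k-1}=-1$ consistent, so no case is vacuous. The genuine obstacle is thus concentrated in this single point: one needs a Frobenius class that is simultaneously trace-zero and cyclotomically $-1$, and oddness of the Galois representations is exactly what makes complex conjugation serve both roles at once.
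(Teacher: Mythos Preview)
The paper does not prove this proposition: it is quoted from Serre \cite[Exercise~6.4]{S2}, with the one-line remark that annihilation by $T_Q$ modulo $p$ ``immediately implies'' the statement. Your proposal goes considerably further, actually sketching why Serre's theorem holds and, crucially, why the annihilating primes can be taken in the progression $Q\equiv -1\pmod{pN}$: you reduce to eigenforms, invoke the mod~$p$ Galois representations of Deligne (Deligne--Serre in weight one), and use oddness so that complex conjugation is simultaneously trace-zero for every $\overline\rho_i$ and maps to $-1$ under the cyclotomic character, whence Chebotarev yields the desired positive density. This is the standard mechanism and is correct.

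Two remarks. First, you are right to restrict the conclusion to $n$ coprime to $Q$: from $f\mid T_Q\equiv 0$ and $Q\equiv -1\pmod{pN}$ one gets $c(Qn)\equiv c(n/Q)\pmod p$, so for instance $c(Q^2)\equiv c(1)$, which need not vanish. The paper's wording ``for any integer $n$'' is slightly loose; the result is only used (via Treneer's Proposition~\ref{pro:Treneer}) for $n$ coprime to $Qp$, so no harm is done. Second, what your approach buys over the paper's black-box citation is an explanation of \emph{why} the congruence condition $Q\equiv -1\pmod{pN}$ comes for free: it is not an extra constraint layered on top of Serre's density statement, but is already present in the Chebotarev class of complex conjugation that produces the annihilation.
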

The previously mentioned result of Treneer for weakly holomorphic modular forms follows from Proposition \ref{pro:CuspSerre}.
\begin{proposition}[Treneer] \label{pro:Treneer}
Suppose that 
\[
g(\tau)=\sum_{n\gg -\infty} c(n) q^{n} 
\]
is a weakly holomorphic modular form of weight $k\geq 1$ and level $N$. Then for each prime $p$ coprime to $N$ a positive proportion of the primes $Q\ne p$, $Q\equiv -1 \pmod{p^jN}$ have the property that for any integer $m$ sufficiently large 
\[
c(Qp^m n)\equiv 0 \pmod{p^j},
\] 
for any integer $n$ coprime to $Qp$.
\end{proposition}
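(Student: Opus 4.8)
The plan is to reduce to the cuspidal case treated in Proposition~\ref{pro:CuspSerre} by using the Atkin $U_p$- and $V_p$-operators to sieve $g$ down to (the reduction modulo $p^j$ of) a genuine cusp form, following Treneer \cite{Tr} and the earlier work of Ahlgren and Ono. Recall that on $q$-expansions one has $U_p\big(\sum_n c(n)q^n\big)=\sum_n c(pn)q^n$ and $V_p\big(\sum_n c(n)q^n\big)=\sum_n c(n)q^{pn}$, and that these operators carry weakly holomorphic modular forms of weight $k$ and level $N$ (with $p\nmid N$) to weakly holomorphic modular forms of the same weight and level $Np^r$ for a small $r$. First I would pass to the $p$-deprived part
\[
\widehat{g}_m:=\sum_{\gcd(n,p)=1} c(p^m n)\,q^n = g\mid U_{p^m}-\big(g\mid U_{p^{m+1}}\big)\mid V_p,
\]
which is again weakly holomorphic of weight $k$ on some $\Gamma_0(Np^r)$ with $p$-integral coefficients. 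The point of projecting onto indices coprime to $p$ is that it discards the term of index $0$, and hence the constant term at $\infty$, which will be needed to produce a \emph{cusp} form rather than merely a holomorphic one.

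The crucial step is to show that for all sufficiently large $m$ the form $\widehat{g}_m$ is congruent modulo $p^j$ to a holomorphic cusp form $G_m$ of some integral weight $k'\ge1$ on $\Gamma_0(Np^r)$ with integer coefficients. At the cusp $\infty$ this is elementary: the principal part of $g$ is a finite sum $\sum_{0<d\le D}c(-d)q^{-d}$, and since $g\mid U_{p^m}$ retains only the coefficients of index divisible by $p^m$, once $p^m>D$ the expansion of $\widehat{g}_m$ has no negative powers of $q$ and no constant term, so the pole at $\infty$ is gone. The delicate part is the behaviour at the remaining cusps, and here I would invoke Treneer's analysis \cite{Tr}: writing the expansions of $g$ at each cusp of $\Gamma_0(Np^r)$ and tracking how repeated application of $U_p$ contracts the orders of the poles, one shows that for $m$ large the coefficients in every principal part are divisible by $p^j$, so that modulo $p^j$ the sieved form becomes holomorphic and vanishing at all cusps. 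Its reduction is then a holomorphic cusp form modulo $p^j$ in the sense of Serre, and by the theory of modular forms modulo prime powers (multiplying, if convenient, by a form $\equiv1\pmod{p^j}$ of weight $(p-1)p^{j-1}$ to land in a genuine space of a fixed integral weight) it lifts to the desired integral cusp form $G_m$ with $G_m\equiv\widehat{g}_m\pmod{p^j}$.

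With $G_m$ in hand I would apply Proposition~\ref{pro:CuspSerre} --- more precisely its extension from the modulus $p$ to the prime power $p^j$, obtained by running the same density argument on the finite ring of Hecke operators acting modulo $p^j$. This produces a positive proportion of primes $Q\ne p$ with $Q\equiv-1\pmod{p^jN}$ for which $G_m$ is annihilated by the Hecke operator $T_Q$ modulo $p^j$. Writing $G_m=\sum_n a(n)q^n$, the relation $a(Qn)+Q^{k'-1}a(n/Q)\equiv0\pmod{p^j}$ then holds for all $n$; when $Q\nmid n$ the term $a(n/Q)$ is absent, so $a(Qn)\equiv0\pmod{p^j}$. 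Finally, since $a(Qn)$ is congruent modulo $p^j$ to the $Qn$-th coefficient of $\widehat{g}_m$, which equals $c(Qp^m n)$ precisely when $\gcd(n,Qp)=1$ (so that $p\nmid Qn$), I obtain $c(Qp^m n)\equiv0\pmod{p^j}$, as claimed.

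The main obstacle is the cusp analysis in the middle paragraph: unlike the pole at $\infty$, which $U_{p^m}$ removes outright, clearing the principal parts at the other cusps only modulo $p^j$ requires uniform control over the $p$-adic valuations of the Fourier coefficients of $g$ in its expansions at every cusp of $\Gamma_0(Np^r)$. This is precisely the technical core of Treneer's construction; by comparison the sieving identities, the removal of the pole at $\infty$, and the final passage through Serre's theorem are formal.
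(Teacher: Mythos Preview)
The paper does not supply its own proof of this proposition: it is quoted as a result of Treneer \cite{Tr}, with only the one-line remark that it ``follows from Proposition~\ref{pro:CuspSerre}.'' Your proposal is a faithful sketch of Treneer's original argument --- sieving via $U_{p^m}$ and $V_p$ to strip the pole at $\infty$, invoking the cusp-by-cusp $p$-adic analysis to kill the remaining principal parts modulo $p^j$, lifting to a genuine cusp form, and finishing with Serre's density theorem --- so it is consistent with what the paper cites and correctly identifies the technical core (the control of the expansions at the bad cusps) as the non-formal step.
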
   
\begin{remark}
In \cite{Tr} Treneer proves Proposition \ref{pro:Treneer} for $p$ odd. For integral weight weakly holomorphic modular forms the proof can easily be shown for $p=2$ as well.
\end{remark}

We conclude by recalling the modular properties of $E_2$. It is well known that for each even $k\geq2$, the Eisenstein series  
\begin{equation} \label{eq:Ek}
E_{2k}(\tau):=1-\frac{2k}{B_{2k}}\sum_{n\geq 1}\sigma_{2k-1}(n)q^n
\end{equation}
is a modular from of weight $k$. Here, 
$\sigma_{2k-1}(n):=\sum_{d|n}d^{2k-1}$ and $B_{2k}$ the $2k$th Bernoulli number. Moreover, the Eisenstein series $E_4$ and $E_6$ freely generate the ring of modular forms on $\SL_2(\Z)$. For $k=1$ the function in \eqref{eq:Ek} is still well defined, but it does not transform as a modular from. Instead, one can easily see that its completion
\begin{equation} \label{E2Tran}
\widehat{E_2}(\tau):=E_2(\tau) -\frac{3}{\pi y}, \quad \quad (y=\im(\tau)),
\end{equation}
is a weight $2$ non-holomorphic  modular form.



\subsection{\textbf{\textsc{The Appell sum}}} \label{ss:Appell}

For $\tau \in \C$, $u \in \C \setminus \rl{\Z\tau+\Z}$, and $v\in \C$,  the Appell sum is defined by
\begin{equation} \label{eq:Appell}
\call{A}(u,v;\tau):=\e^{\pi i u }\sum_{n\in \Z} \frac{(-1)^n q^{\frac{n^2+n}{2}} \e^{2\pi inv}}{1-\e^{2\pi i u}q^n}.
\end{equation}
Zwegers used this function to construct Ramanujan's mock theta functions (to be precise, Zwegers normalized the function $\call{A}$ by the Jacobi theta funtion \eqref{eq:200}).
The heart of the first part of Zwegers' Ph.D. thesis relies  on the description of the modularity properties of $\call{A}$. In order to describe his result, we recall the definition of Jacobi's theta function
\begin{equation} \label{eq:200}
\theta(v;\tau):=\sum_{n\in \frac12+\Z} q^{\frac{n^2}{2}  }\e^{2\pi in\rl{v+\frac12}},
\end{equation}
and the non-holomorphic theta function
\[
R(u;\tau):=:=-i \sum_{n\in \frac12+\Z}\rl{   \sgn(n) +\sgn\rl{\frac{\im(u)}{\im(\tau)}+n} \rl{\beta_1 \rl{\rl{n+\frac{\im(u)}{\im(\tau)}}^2 2\im(\tau)}-1}     } q^{-\frac{n^2}{2} } \e^{-2\pi in\rl{u+\frac12}},
\]
where the function $\beta_1$ was already introduced in \eqref{eq:100}. 
Zwegers constructed the completion of the Appell sum $\call{A}$ as
\begin{equation} \label{AppComp}
\widehat{\call{A}}(u,v;\tau):=\call{A}(u,v;\tau) +\frac{i}{2} \theta(v;\tau)R(u-v;\tau).
\end{equation}
In the following proposition we recall some of the transformation properties satisfied by $\widehat{\call{A}}$.
\begin{proposition}[Zwegers \cite{Zw}] \label{pro:AppTran}
Let $\widehat{\call{A}}(u,v;\tau)$ be as above, then the following hold.
\begin{enumerate}
\item For all $\lambda$, $\mu$, $\ell$, $k \in \Z$, 
\[
\widehat{\call{A}}(u+\lambda \tau +\mu,v+\ell \tau + k;\tau)=(-1)^{k+\ell}q^{\frac{\lambda^2-2\lambda \ell}{2}} \e^{2\pi i\rl{ u(\lambda -\ell) -\lambda v   }}
\widehat{\call{A}}(u,v;\tau).
\]
\item For all $\rl{ \begin{smallmatrix}a & b \\ c &d \end{smallmatrix}  } \in \SL_2(\Z)$, 
\[
\widehat{\call{A}}\rl{\frac{u}{c\tau+d},\frac{v}{c\tau+d};\frac{a\tau +b}{c\tau+d}}=(c\tau +d )    \e^{\pi i \frac{c}{c\tau+d} \rl{2uv-u^2} } \widehat{\call{A}}(u,v;\tau).
\]
\end{enumerate} 

\end{proposition}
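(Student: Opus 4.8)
The plan is to check both identities on generators and then propagate them by composition. The lattice of shifts $(u,v)\mapsto(u+\lambda\tau+\mu,\,v+\ell\tau+k)$ with $\lambda,\mu,\ell,k\in\Z$ is generated by the four elementary moves $u\mapsto u+1$, $v\mapsto v+1$, $u\mapsto u+\tau$, $v\mapsto v+\tau$, and $\SL_2(\Z)$ is generated by $T=\smat{1}{1}{0}{1}$ and $S=\smat{0}{-1}{1}{0}$. Since in each case the asserted right-hand side is an automorphy factor (a one-cocycle in the relevant variable), it suffices to verify the multiplier on each generator and to confirm that the factors compose correctly; the latter is a mechanical check on the exponential prefactors.

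Part (1). The two integer shifts $u\mapsto u+1$ and $v\mapsto v+1$ produce only elementary phases, read off directly from the definitions of $\call{A}$, $\theta$, and $R$: replacing $u$ by $u+1$ multiplies the prefactor $\e^{\pi i u}$ while leaving $\e^{2\pi i u}$ and $q$ untouched, and $n\in\Z$ makes $\e^{2\pi i n(v+1)}=\e^{2\pi i nv}$, while the corresponding shifts of $\theta$ and $R$ contribute the matching signs. The substance lies in the two shifts by $\tau$, which are the only ones under which the holomorphic sum $\call{A}$ fails to be elliptic. Writing $w=\e^{2\pi i u}$ and computing $\call{A}(u,v+\tau;\tau)$ by the partial-fraction identity $\frac{q^n}{1-wq^n}=\frac{1}{w}\rl{\frac{1}{1-wq^n}-1}$, the geometric part reproduces $\e^{-2\pi i u}\call{A}(u,v;\tau)$ while the constant part collapses, by completing the square, to a residual multiple of $\theta(v;\tau)$; the same phenomenon occurs for $\call{A}(u+\tau,v;\tau)$ after reindexing $n\mapsto n-1$. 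The remaining task is to show that these theta residues are exactly cancelled by the change in the completion term $\frac{i}{2}\theta(v;\tau)R(u-v;\tau)$. Because $R$ enters only through the argument $u-v$, a single quasi-periodicity relation for $R$ under $u-v\mapsto(u-v)\pm\tau$ governs both $\tau$-shifts simultaneously, and matching its theta-valued defect against the residues above yields part (1).

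Part (2). For $T$ one has $\tau\mapsto\tau+1$ with $u,v$ fixed, so $q$ is unchanged and $\call{A}$ is literally invariant (its $q$-exponents being integral); the half-integral exponents in $\theta$ and $R$ contribute the opposite eighth roots of unity $\e^{\pi i/4}$ and $\e^{-\pi i/4}$, which cancel, giving the $c=0$ instance of the formula with trivial multiplier. The essential case is $S$, where $(u,v;\tau)\mapsto(u/\tau,\,v/\tau;\,-1/\tau)$ and the claimed automorphy factor is $\tau\,\e^{\pi i(2uv-u^2)/\tau}$. I would assemble it from three inputs: Jacobi's inversion formula for $\theta$, which supplies the factor $\sqrt{-i\tau}\,\e^{\pi i v^2/\tau}$; the $S$-transformation of the non-holomorphic kernel $R$; and the modular defect of $\call{A}$ itself. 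The point is that $\call{A}$ is \emph{not} modular under $S$: its failure can be expressed as a Mordell-type integral, and the $S$-transformation of $R$ is engineered so that the very same integral appears with the opposite sign. Adding the holomorphic and non-holomorphic contributions cancels the Mordell integrals and leaves precisely the stated factor.

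I expect the single genuine obstacle to be the $S$-transformation of the non-holomorphic completion $R$ together with its matching to the modular defect of $\call{A}$. Concretely, one must show that both defects are controlled by the Mordell integral
\[
h(u;\tau)=\int_{\R}\frac{\e^{\pi i\tau x^2-2\pi u x}}{\cosh(\pi x)}\,\diff x ,
\]
and that they cancel. Establishing the transformation of $h$ (equivalently of $R$) under $\tau\mapsto-1/\tau$ is the analytic heart of the argument: it requires Poisson summation, contour shifting, and the analytic continuation of the incomplete-Gamma kernel $\beta_1$ appearing in \eqref{eq:100}. Once this one identity is secured, the verification on $S$ and $T$, and hence the full proposition, reduces to the bookkeeping of exponential multipliers described above.
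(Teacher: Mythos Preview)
The paper does not prove this proposition at all: it is stated as a citation of Zwegers' thesis \cite{Zw} and used as a black box in the proof of Theorem~\ref{thm1}. So there is no ``paper's own proof'' to compare your sketch against.

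That said, your outline is essentially Zwegers' original argument. Reducing part (1) to the four elementary shifts and part (2) to $T$ and $S$, and then identifying the Mordell integral $h(u;\tau)$ as the common defect of $\call{A}$ and $R$ under $S$, is exactly how Chapter~1 of \cite{Zw} proceeds. The one place where your description is slightly imprecise is the $u\mapsto u+1$ shift: you say it ``multiplies the prefactor $\e^{\pi i u}$ while leaving $\e^{2\pi i u}$ and $q$ untouched,'' but the asserted formula in part (1) with $\mu=1$ and $\lambda=\ell=k=0$ gives a trivial factor, so the sign $\e^{\pi i}$ from the prefactor must be absorbed elsewhere (it cancels against the $(-1)^n$ after the implicit reindexing, or equivalently one must track that $\call{A}$ and the completion term pick up the same sign). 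This is a bookkeeping point, not a gap. The genuine analytic content, as you correctly identify, is the $S$-transformation of $R$ and its expression via $h$; that computation is carried out in full in \cite{Zw}, Propositions~1.5 and~1.9.
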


\section{\textsc{A family of mock theta functions}}
\label{Mock}
In this section we prove Theorem \ref{thm1}. More precisely, we construct a family of mock theta functions which has a nice expression in terms of derivative of the Appell sum
and the Eisenstein series $E_2$.

\begin{proof}[{\bf Proof of Theorem \ref{thm1}}]
The function  $f_{\frac{A}{B},\epsilon,\wt}$ is clearly holomorphic. It remains to prove that it can be completed to be modular by adding the real analytic function $R_{\frac{A}{B},\epsilon,\wt}$. Here we assume $\epsilon=0$. The computation for $\epsilon=1$ is exactly the same.
In order to do so, we rewrite $f_{\frac{A}{B},\epsilon,\wt}$ as

\begin{align} \label{eq:AppId}
(B^*)^{-1}f_{\frac{A}{B},0,\wt}(\tau) \Theta_{\frac{A}{B},0,\wt}(B\tau)
&=\partial_v \left[ \mathcal{A}(u,v;B\tau)  \right]_{   \begin{subarray}{l} \\ v=  \rl{A-B/2}\tau -1/2  \\  u=0 \end{subarray}}+(-1)^\wt \partial_v \left[ \mathcal{A}(u,v;B\tau)  \right]_{\begin{subarray}{l} \\  v= \rl{B/2-A}\tau -1/2 \\ u=(B-2A)\tau \end{subarray}} \nonumber \\
&+(-1)^\wt \frac{B-2A}{B} \mathcal{A}\rl{(B-2A)\tau,\rl{\frac{B}{2}-A}\tau -1/2 ;B\tau} +\frac{1}{12 B}E_2(\tau),
\end{align}
where $\partial_v:=\frac{1}{2\pi i}\frac{\partial}{\partial v}$.
This identity follows immediately from the definition of $\call{A}$( see \eqref{eq:Appell}).

We define the completion $\widehat{f}_{\frac{A}{B},0,\wt}(\tau)$ of $f_{\frac{A}{B},0,\wt}(\tau)$ by replacing $\mathcal{A}$ by $\widehat{\mathcal{A}}$ in \eqref{eq:AppId}. We need to show that $\widehat{f}_{\frac{A}{B},0,\wt}(\tau)$ transforms as a modular form of weight $2$ and that 
\[
\widehat{f}_{\frac{A}{B},0,\wt}(\tau)-f_{\frac{A}{B},0,\wt}(\tau) =R_{\frac{A}{B},0,\wt}(\tau).
\]
We start by proving the modularity property. To simplify the notation, for $\gamma=\smat{a}{b}{c}{d}\in \SL_2(\Z)$ we write $\gamma \tau:=\frac{a\tau+b}{c\tau+d}$. Also, we call $s:=\frac{B}{2}-A$. By definition we have
\begin{align}  \label{eq:0}
(B^*)^{-1} \widehat{f}_{\frac{A}{B},0,\wt}(\gamma\tau) \Theta_{\frac{A}{B},0,\wt}(B\cdot\gamma\tau)
&=\partial_v \left[ \widehat{\mathcal{A}}(u,v;B\cdot\gamma\tau)  \right]_{   \begin{subarray}{l} \\ v= -s\cdot \gamma\tau -1/2  \\  u=0 \end{subarray}}+(-1)^\wt \partial_v \left[ \widehat{\mathcal{A}}(u,v;B\cdot\gamma\tau)  \right]_{\begin{subarray}{l} \\  v=  s\cdot \gamma\tau -1/2 \\ u=2s\cdot \gamma\tau \end{subarray}} \nonumber \\
&+(-1)^\wt \frac{B-2A}{B} \widehat{\mathcal{A}}\rl{2s\cdot \gamma\tau,s\cdot\gamma \tau -1/2 ;B\cdot\gamma\tau} +\frac{1}{12B }E_2(\gamma\tau).
\end{align}
We study each of these four terms separately using Proposition \ref{pro:AppTran} and eq. \eqref{E2Tran}.

The first term: 
\begin{align*}
&\partial_v \left[ \widehat{\mathcal{A}}(u,v;B\cdot\gamma\tau)  \right]_{   \begin{subarray}{l} \\ v= -s\cdot\gamma\tau -1/2  \\  u=0 \end{subarray}} =
\partial_v \left[ (c\tau+d)^2 \e^{2\pi i\frac{c}{2B(c\tau+d)}(2uv-u^2)}   \widehat{\mathcal{A}}(u,v;B\tau)  \right]_{   \begin{subarray}{l} \\ v= -s(a\tau+b )-(c\tau+d)/2  \\  u=0 \end{subarray}} \\ 
&=(c\tau+d)\frac{c}{B} \lim_{u\rightarrow 0 } u  \widehat{\mathcal{A}}(u,v;B\tau) 
+(c\tau+d)^2 \partial_v \left[ \widehat{\mathcal{A}}(u,v;B\tau)  \right]_{   \begin{subarray}{l} \\ v=-s(a\tau+b )-(c\tau+d)/2  \\  u=0 \end{subarray}}.
\end{align*} 
Using  $\lim_{u\rightarrow 0 } u  \widehat{\mathcal{A}}(u,v;B\tau)=-\frac{1}{2\pi i}$ this equals
\begin{equation} \label{eq:1} 
-\frac{(c\tau+d)}{2\pi i}  \frac{c}{B} +(c\tau+d)^2 \partial_v \left[ \widehat{\mathcal{A}}(u,v;B\tau)  \right]_{   \begin{subarray}{l} \\ v=  -s\tau -1/2  \\  u=0 \end{subarray}}.
\end{equation}
The second term:
\begin{align*}
&\partial_v \left[ \widehat{\mathcal{A}}(u,v;B\cdot\gamma\tau)  \right]_{\begin{subarray}{l} \\  v= s\cdot\gamma\tau -1/2 \\ u=2s\cdot \gamma\tau \end{subarray}}=
(c\tau+d)^2 \partial_v \left[  \e^{2\pi i\frac{c}{2B(c\tau+d)}(2uv-u^2)}   \widehat{\mathcal{A}}(u,v;B\tau)  \right]_{   \begin{subarray}{l} \\ v= s(a\tau+b )-(c\tau+d)/2  \\  u=2s(a\tau+b) \end{subarray}} \\
&=(c\tau+d)^2  q^{-\frac{sac}{B}} \rl{             
 \partial_v \left[ 
 \widehat{\mathcal{A}}\rl{u+2s(a-1)\tau,v+s(a-1)\tau -\frac{c\tau+d-1}{2};B\tau}
 \right]_{   \begin{subarray}{l} \\ v= s\tau-1/2  \\  u=2s\tau \end{subarray}} \right.\\
 & \left.
+\frac{2cs(a\tau+b)}{B(c\tau+d)} \widehat{\mathcal{A}}\rl{2s(a\tau+b),s(a\tau+b)-\frac{c\tau+d}{2};B\tau}}.
\end{align*}
Using the elliptic transformation properties of $ \widehat{\mathcal{A}}$ this term equals
\begin{equation}  \label{eq:2} 
(c\tau+d)^2 q^{-\frac{sac}{B}} \rl{   q^{\frac{sac}{B}}   \partial_v \left[ 
 \widehat{\mathcal{A}}\rl{u+2s\tau,v+s\tau -\frac{1}{2};B\tau}
 \right]_{   \begin{subarray}{l} \\ v= s\tau-1/2  \\  u=2s\tau \end{subarray}}     +\frac{2s}{B}\rl{1-\frac{1}{c\tau+d}} q^{\frac{sac}{B}}  \widehat{\mathcal{A}}\rl{2s\tau,s\tau-\frac{1}{2};B\tau} }.
\end{equation}
A similar computation yields the third term:
\begin{equation}  \label{eq:3} 
\widehat{\mathcal{A}}\rl{2s\cdot \gamma\tau,s\cdot\gamma \tau -1/2;B\cdot\gamma\tau} =(c\tau+d)\widehat{\mathcal{A}}\rl{2s\tau,s\tau -1/2;B\tau} .
\end{equation}
Finally, using \eqref{E2Tran} we rewrite the fourth term as
\begin{equation}  \label{eq:4} 
E_2(\gamma\tau) =(c\tau+d)^2 E_2(\tau) +\frac{6c(c\tau+d)}{\pi i}.
\end{equation}
From \eqref{eq:1}, \eqref{eq:2}, \eqref{eq:3}, and \eqref{eq:4}, we rewrite \eqref{eq:0} as
\begin{multline*}
(B^*)^{-1}\widehat{f}_{\frac{A}{B},0,\wt}(\gamma\tau) \Theta_{\frac{A}{B},0,\wt}(B\cdot\gamma\tau)=
(c\tau+d)^2 \partial_v \left[ \widehat{\mathcal{A}}(u,v;B\tau)  \right]_{   \begin{subarray}{l} \\ v=  -s\tau -1/2  \\  u=0 \end{subarray}}
 -\frac{(c\tau+d)}{2\pi i}  \frac{c}{B} \\
+(c\tau+d)^2 (-1)^\wt\partial_v \left[ 
 \widehat{\mathcal{A}}\rl{u+2s\tau,v+s\tau -\frac{1}{2};B\tau}
 \right]_{   \begin{subarray}{l} \\ v= s\tau-1/2  \\  u=2s\tau \end{subarray}}  \\
 +(-1)^\wt (c\tau+d)^2 \frac{2s}{B}\rl{1-\frac{1}{c\tau+d}} q^{\frac{sac}{B}}  \widehat{\mathcal{A}}\rl{2s\tau,s\tau-\frac{1}{2};B\tau}  \\
+(-1)^\wt (c\tau+d)\frac{2s}{B}\widehat{\mathcal{A}}\rl{2s\tau,s\tau -\frac12;B\tau} +\frac{(c\tau+d)^2}{12B}E_2(\tau)+\frac{6c(c\tau+d)}{12B \pi i},
\end{multline*}
which by definition of $\widehat{f}_{\frac{A}{B},0,\wt}$ equals $\widehat{f}_{\frac{A}{B},0,\wt}(\tau) \Theta_{\frac{A}{B},0,\wt}(B\tau)$.

We conclude the proof of the theorem by computing the non-holomorphic part of $\widehat{f}_{\frac{A}{B},0,\wt}$. As before, let $s:=\frac{B}{2}-A$.
From \eqref{AppComp} and \eqref{eq:0} we have
\begin{multline*}
\widehat{f}_{\frac{A}{B},0,\wt}(\tau) \Theta_{\frac{A}{B},0,\wt}(B\tau) =
f_{\frac{A}{B},0,\wt}(\tau) \Theta_{\frac{A}{B},0,\wt}(B\tau)
+\frac{iB^*}{2}\rl{        \theta^\prime\rl{-s\tau-\frac12;B\tau}R\rl{s\tau+\frac12;B\tau} \right. \\ \left.
+ \theta\rl{-s\tau-\frac12;B\tau}R^\prime\rl{s\tau+\frac12;B\tau} 
+(-1)^\wt  \theta^\prime\rl{s\tau-\frac12;B\tau}R\rl{s\tau+\frac12;B\tau}  \right. \\ \left.
+(-1)^\wt \theta\rl{s\tau-\frac12;B\tau}R^\prime\rl{s\tau+\frac12;B\tau} 
+(-1)^\wt \frac{2s}{B}\theta\rl{s\tau-\frac12;B\tau}R\rl{s\tau+\frac12;B\tau} 
     },
\end{multline*}
where $\theta^\prime$ and $R^\prime$ denote the derivatives in the elliptic variable.
The result follows using the parity properties of $\theta$ and $R$.
\end{proof}

Note that  the Fourier expansion   of the mock modular form 
$f_{\frac{A}{B},\epsilon,\wt}$ at any cusp has integral coefficients and its growth at the cusps is dictated by the decay of $\Theta_{\frac{A}{B},\epsilon,\wt}$. 
In fact, the Fourier expansion at infinity of $f_{\frac{A}{B},\epsilon,\wt}$ has a particular and nice shape, which we describe in the following proposition.
\begin{proposition} \label{pro:minimum}
Let $f_{\frac{A}{B},\epsilon,\wt}\Theta_{\frac{A}{B},\epsilon,\wt}$ be as above, then 
\[
f_{\frac{A}{B},\epsilon,\wt}(\tau)\Theta_{\frac{A}{B},\epsilon,\wt}(B\tau)=\frac{B^*}{12B}+ \frac{B^*}{B}\sum_{n>0}\rl{ \sum_{(a,b)\in V_n } \sgn\rl{b^2-B^2a^2}^\wt (-1)^{\epsilon a} \min\left\{ |b|,|Ba|  \right\} +2\sigma\rl{\frac{n}{2}}}q^{\frac{n}{2}},
\]
where $V_n:=\left\{ (a,b)\in \Z^2 \colon \substack{ab=n,\\ \quad b+Ba\equiv 2A \pmod{2B}  } \right\}$
\end{proposition}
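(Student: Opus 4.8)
The plan is to read the Fourier expansion straight off the closed form for $f_{\frac{A}{B},\epsilon,\wt}\Theta_{\frac{A}{B},\epsilon,\wt}$ provided by Theorem \ref{thm1}, by expanding its two Lambert-type sums into geometric series and then reindexing; I fix $N>0$ and compute the coefficient of $q^{N/2}$. In the first sum I split the outer index by the sign of $n$: for $n>0$ one has $\frac{1}{1-q^{Bn}}=\sum_{m\ge0}q^{Bnm}$, whereas for $n<0$ convergence forces $\frac{1}{1-q^{Bn}}=-\sum_{m\ge1}q^{-Bnm}$, and this minus sign is exactly what produces the correct signs below. Putting $a:=n$ and $b:=Bn+2A+2Bm$ when $n>0$ (respectively $b:=Bn+2A-2Bm$ when $n<0$) the exponent becomes $\frac{ab}{2}$, so a monomial hits $q^{N/2}$ precisely when $ab=N$, and the coefficient is $(-1)^{\epsilon a}|a|$ in both cases. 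Treating the second sum the same way, writing its coefficient as $n-\frac{2A}{B}=\frac{Bn-2A}{B}$ and setting $b:=Bn-2A$, $a:=n+2m$, the exponent is $\frac{(Bn-2A)(n+2m)}{2}=\frac{ab}{2}$, again with $ab=N$.

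Next I verify that these reindexings biject onto $V_N$ and reproduce the stated summand. A one-line computation gives $b+Ba\equiv 2A\pmod{2B}$ for the pairs coming from the first sum and $b+Ba\equiv -2A\pmod{2B}$ for those from the second; since the summand $\sgn\rl{b^2-B^2a^2}^\wt(-1)^{\epsilon a}\min\{|b|,|Ba|\}$ is invariant under $(a,b)\mapsto(-a,-b)$, I replace each pair from the second sum by its negative, which moves it into $V_N$ without changing its contribution. Tracking the inequalities (for which one may assume $0<A<B$) shows that the first sum ranges over exactly the pairs with $b^2>B^2a^2$, for both signs of $a$, where $\min\{|b|,|Ba|\}=B|a|$ and $\sgn\rl{b^2-B^2a^2}^\wt=1$, so its coefficient $(-1)^{\epsilon a}|a|$ is $\frac1B(-1)^{\epsilon a}\min\{|b|,|Ba|\}$; dually the second sum ranges over the pairs with $b^2<B^2a^2$, where $\min\{|b|,|Ba|\}=|b|$ and $\sgn\rl{b^2-B^2a^2}^\wt=(-1)^\wt$, matching the global factor $(-1)^\wt$ carried in front of it. Hence the two sums combine to give $\frac{B^*}{B}\sum_{(a,b)\in V_N}\sgn\rl{b^2-B^2a^2}^\wt(-1)^{\epsilon a}\min\{|b|,|Ba|\}$, i.e. the off-diagonal part of the asserted sum.

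It remains to treat $\frac{1}{12B}E_2$ together with the diagonal $b^2=B^2a^2$. The Fourier expansion of $E_2$ supplies the constant $\frac{B^*}{12B}$ and a contribution proportional to $\sigma(N/2)$, nonzero only for even $N$ since $E_2$ carries only integral powers of $q$; matching its normalization and sign against the term $\frac{B^*}{B}\cdot 2\sigma\rl{\frac N2}$ is part of the bookkeeping. For the diagonal, $b=\pm Ba$ with $ab=N>0$ forces $b=Ba$ and $N=Ba^2$, whence $b+Ba=2Ba\equiv0\pmod{2B}$; as $\gcd(A,B)=1$ this is incompatible with the congruence defining $V_N$ for every $B\ge2$, so the diagonal does not meet $V_N$ and the ambiguous factor $\sgn(0)^\wt$ never occurs. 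The genuinely degenerate situations, namely $B=1$ and the indices $n$ for which $1-q^{Bn-2A}$ vanishes, are handled by the same limiting argument used in the proof of Theorem \ref{thm1}, via $\lim_{u\to0}u\,\widehat{\mathcal{A}}(u,v;B\tau)=-\frac{1}{2\pi i}$.

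The computation is otherwise routine geometric-series manipulation. The one delicate point, which I expect to be the main obstacle, is showing that the reindexing is a bijection onto $V_N$ \emph{with the correct sign and region}: reconciling the two congruences $\pm2A$ through the map $(a,b)\mapsto(-a,-b)$, checking via the strict inequalities $b^2\gtrless B^2a^2$ that $\min\{|b|,|Ba|\}$ and $\sgn\rl{b^2-B^2a^2}^\wt$ come out exactly as claimed (including the boundary/gap analysis guaranteeing that no pair is missed or double-counted), and fixing the precise sign and normalization of the divisor term $2\sigma(N/2)$ coming from the quasimodular expansion of $E_2$.
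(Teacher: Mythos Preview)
Your proposal is correct and follows essentially the same approach as the paper: expand the two Lambert-type sums from Theorem~\ref{thm1} as geometric series, reindex so that each monomial has exponent $\tfrac{ab}{2}$, and identify the resulting index set with $V_N$. The only cosmetic difference is the choice of coordinates. The paper first substitutes $n\mapsto s-m$ in both sums to merge $\Sigma_1$ and $\Sigma_2$ into a single sum over $|s|>|m|$, and then applies the bijection $(s,m)\mapsto (a,b)=(s-m,\,2A+B(s+m))$ onto $V_n$; this lands directly in $V_n$ with the correct congruence and yields the coefficient $\tfrac12\bigl(|b+Ba|-|b-Ba|\bigr)=\min\{|b|,|Ba|\}$ in one stroke. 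You instead keep the two sums separate, read off $(a,b)$ directly from each, and then repair the congruence $b+Ba\equiv -2A\pmod{2B}$ coming from the second sum via the involution $(a,b)\mapsto(-a,-b)$. Both routes amount to the same bijection; the paper's merged parametrisation avoids your extra negation step and the separate region analysis $b^2\gtrless B^2a^2$, while your version makes the provenance of the sign $(-1)^\wt$ and of $\min\{|b|,|Ba|\}$ from each individual sum a little more transparent. Your remarks on the diagonal $b^2=B^2a^2$ and on the degenerate indices where $1-q^{Bn-2A}$ vanishes are sound and are not spelled out in the paper's proof.
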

\begin{proof}[{\bf Proof}] 
We refer to the statement of Theorem \ref{thm1} for the definition of $f_{\frac{A}{B},\epsilon,\wt}\Theta_{\frac{A}{B},\epsilon,\wt}$. We consider each of the three summands separately. Expanding the denominator in the first summand, which we call here $\Sigma_1$, in a geometric series, it equals
\begin{equation} \label{eq:11}
\Sigma_1=\sum_{\substack{n>0 \\ m\geq 0}} (-1)^{\epsilon n} n q^{\frac{Bn^2+2An}{2}} q^{Bmn} -
\sum_{\substack{n<0 \\ m< 0}} (-1)^{\epsilon n} n q^{\frac{Bn^2+2An}{2}} q^{Bmn}.
\end{equation}
Replacing $n$ by $s-m$, we rewrite \eqref{eq:11} as 
\[
\Sigma_1=\rl{\sum_{s>m\geq 0} -\sum_{s<m<0} }(-1)^{\epsilon (s-m)} (s-m) q^{\frac{s-m}{2}\rl{2A+B(s+m)}} .
\]
Similarly, the second summand  in the definition of $f_{\frac{A}{B},\epsilon,\wt}\Theta_{\frac{A}{B},\epsilon,\wt}$, which we call $\Sigma_2$, can be written as
\[
\Sigma_2=(-1)^{\wt}  \rl{  \sum_{-s\leq m <0} - \sum_{-s>m\geq 0}  }(-1)^{\epsilon (s-m)} \rl{s+m +\frac{2A}{B} } q^{\frac{s-m}{2}\rl{2A+B(s+m)}}.
\]
In particular we can write $\Sigma_1+\Sigma_2$ in a unique formula as
\begin{equation} \label{eq:12}
\Sigma_1+\Sigma_2=\sum_{\substack{s,m\in \Z \\ |s|>|m|}} \sgn\rl{sr}^\wt (-1)^{\epsilon (s-m)} \rl{\left| s+\frac{A}{B} \right|-\left| m+\frac{A}{B} \right|} q^{\frac{s-m}{2}\rl{2A+B(s+m)}}.
\end{equation}
We fix a $q$-exponent $n$. From \eqref{eq:12} we know that $n=(s-m)\rl{2A+B(s+m)}$ for certain integers $s$ and $m$. Since the map 
\begin{align*}
U_n:=\left\{  (s,m)\in \Z^2 \colon   n=\rl{s-m}\rl{2A+B(s+m)} \right\}        & \rightarrow       
 V_n:=    \left\{ (a,b)\in \Z^2 \colon \substack{ab=n,\\ \quad b+Ba\equiv 2A \pmod{2B}  } \right\}  \\
(s,m) \quad \quad \quad \quad &\mapsto \quad \quad \quad  (a,b)=(s-m,2A+B(s+m))
\end{align*}
is a bijection, equation \eqref{eq:12} equals
\begin{multline*}
\frac{B^*}{2B} \sum_{n>0}\rl{ \sum_{(a,b)\in V_n} \sgn\rl{b^2-B^2a^2}^\wt (-1)^{\epsilon a} \rl{\left| b+Ba \right|-\left| b-Ba \right|} }q^{\frac{n}{2}} \\
=\frac{B^*}{B} \sum_{n>0}\rl{ \sum_{(a,b)\in V_n} \sgn\rl{b^2-B^2a^2}^\wt (-1)^{\epsilon a} \min\left\{ |b|,|Ba|  \right\} }q^{\frac{n}{2}}.
\end{multline*}
Adding the contribution of $E_2$ completes the proof.
\end{proof}

\section{\textsc{Congruences for $f_{\frac{A}{B},\epsilon,\wt}\Theta_{\frac{A}{B},\epsilon,\wt}$}}
\label{Cong}

In this section we prove certain congruence properties satisfied by the mixed mock modular form $f_{\frac{A}{B},\epsilon,\wt}\Theta_{\frac{A}{B},\epsilon,\wt}$.
We shall see in the next section that these properties are not satisfied by weakly holomorphic modular forms. In other words, we will see that the non-holomorphic function $R_{\frac{A}{B},\epsilon,\wt}$ cause an obstruction to the congruence between a mock modular form and a weakly holomorphic modular form. 

In order to give the precise statement, we recall the definition of $p$-adic valuation of a $q$-series. Letting $p$ be  a prime and $g(\tau)=\sum_n a(n)q^n \in \Q((q))$, then the $p$-adic valuation of $g$ is defined by
$
\nu_p(g):=\inf_n\rl{\nu_p ^*\rl{a(n)}},
$
where $\nu_p^*$ is the standard $p$-adic valuation in $\Q_p$. Moreover, two $q$-series  $g$ and $h$ are congruent modulo $p^m$
if
\[
\nu_p(g-h)\geq \nu_p(g) +m, \quad \quad \quad ( g\equiv h \pmod{p^m}).
\]
\begin{proposition} \label{pro:main}
Let $f_{\frac{A}{B},\epsilon,\wt}(\tau)\Theta_{\frac{A}{B},\epsilon,\wt}(B\tau)=\frac{B^*}{B}\sum_{n\geq 0} c(n)q^{\frac{n}{2}}$.
Then the following are true.
\begin{enumerate}
\item Let $p$ be a fixed odd prime, and $m$ be any non-negative integer. Then for  any  prime $Q\equiv -1 \pmod{p}$ sufficiently large, there exists a positive integer $k$ coprime to $Qp$ such that 
\[
c\rl{ Qp^m k  }\not\equiv 0 \pmod{p}.
\]
\item There exist infinitely many integers $m\geq2$  such that   for  any  prime $Q\equiv \pm1 \pmod{B}$ sufficiently large, there exists an integer $k$ coprime to $2Q$ such that 
\[
c\rl{ Q2^m k  }\equiv 2  \pmod{4}.
\]
\end{enumerate}
\end{proposition}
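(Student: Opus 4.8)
The plan is to read off the Fourier coefficients from Proposition~\ref{pro:minimum} and to separate the two essentially different contributions to
\[
c(n)=t(n)+2\sigma\rl{\tfrac n2},\qquad
t(n):=\sum_{(a,b)\in V_n}\sgn\rl{b^2-B^2a^2}^{\wt}(-1)^{\epsilon a}\min\{|b|,|Ba|\},
\]
where we adopt the convention $\sigma(x)=0$ when $x\notin\Z_{>0}$. The governing principle is that, along the arithmetic progressions of indices dictated by the Hecke--Serre mechanism of Propositions~\ref{pro:CuspSerre} and~\ref{pro:Treneer}, the divisor term $2\sigma(n/2)$ is negligible modulo the relevant prime power, so that the obstruction to congruence is carried entirely by the ``theta-type'' sum $t(n)$, which is precisely the part of $f_{\frac AB,\epsilon,\wt}\Theta_{\frac AB,\epsilon,\wt}$ that is not a constant multiple of $E_2$.

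For part~(1) I would first choose the auxiliary integer $k$ to be odd and coprime to $Qp$; then $n:=Qp^mk$ is odd, so $\sigma(n/2)=0$ and $c(n)=t(n)$ exactly. It remains to exhibit, for every sufficiently large prime $Q\equiv-1\pmod p$, a single such $k$ with $t(Qp^mk)\not\equiv0\pmod p$. I would use the Chinese Remainder Theorem to pin down the residue of $n$ modulo $2B$ (this is possible because $k$ ranges over a full set of residues coprime to $Qp$), so that the defining congruence of $V_n$ singles out a distinguished representation---e.g. the one with $|a|=1$, whose summand equals $\pm(-1)^{\epsilon}B$ once $n>B$---while the remaining representations organise into orbits on which the factor $Q\equiv-1\pmod p$ forces cancellation. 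The surviving distinguished term is a $p$-adic unit as soon as $p\nmid B$; the finitely many primes $p\mid B$ are handled by selecting a different distinguished representation.

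For part~(2) the prime is $p=2$ and $n=Q2^mk$ with $m\ge 2$, so $4\mid n$. Here $\sigma(n/2)=(Q+1)(2^{m}-1)\sigma(k)$ is even because $Q$ is odd, whence $2\sigma(n/2)\equiv0\pmod 4$ and again $c(n)\equiv t(n)\pmod4$. The task becomes to produce infinitely many $m$ for which $t(Q2^mk)\equiv2\pmod4$ for a suitable $k$ coprime to $2Q$. I would analyse $t(n)$ modulo $4$ by reducing the signs modulo $2$ and setting up an involution on $V_n$ that preserves the value $\min\{|b|,|Ba|\}$; paired representations contribute multiples of $4$, and the residue is governed by the fixed points, whose number and parity depend periodically on $m$. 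Choosing $m$ in the residue classes for which exactly one fixed contribution of odd size survives---and using $Q\equiv\pm1\pmod B$ to guarantee that $V_n$ has the corresponding shape---yields $t(n)\equiv2\pmod4$.

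The main obstacle in both parts is the same: the sum $t(n)$ is not multiplicative (the $\min$ breaks the usual divisor-sum structure and the congruence defining $V_n$ couples the two factors), so neither the cancellation in part~(1) nor the parity count in part~(2) follows from a formal identity. The delicate point is to show that, after isolating the distinguished representation, the bulk of $V_n$ really does cancel modulo the relevant prime power uniformly in $Q$; this is where the hypothesis $Q\equiv-1\pmod p$ (respectively $Q\equiv\pm1\pmod B$) and the $Q$-dependent choice of $k$ must be used in tandem. Once this bookkeeping is carried out, Proposition~\ref{pro:main} follows, and it is exactly the non-vanishing in part~(1) that will contradict the Treneer--Serre vanishing of Proposition~\ref{pro:Treneer} in the proof of Theorem~\ref{thm2}.
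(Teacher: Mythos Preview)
Your overall strategy---split $c(n)=t(n)+2\sigma(n/2)$ via Proposition~\ref{pro:minimum}, kill the $\sigma$-contribution, and show $t(n)\not\equiv0$---is exactly the paper's. But your specific implementation of part~(1) has a genuine gap, and it is not just the ``bookkeeping'' you flag.

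You propose to take $k$ \emph{odd} so that $n=Qp^mk$ is odd and $\sigma(n/2)=0$ identically. This fails outright when $B$ is even: reduce the defining congruence $b+Ba\equiv 2A\pmod{2B}$ modulo~$2$ to see that $b$ must be even, while $ab=n$ odd forces $b$ odd. Hence $V_n=\varnothing$ and $t(n)=0$ for every odd $n$ whenever $2\mid B$, so no odd $k$ can possibly work. The paper avoids this by going in the opposite direction: it takes $k\in\{4,8,4\ell,8\ell\}$ (all even), uses $Q\equiv-1\pmod p$ to kill $2\sigma(n/2)$ modulo~$p$ via the factor $(1+Q)$, and then exploits the fact that for such small $k$ the set $V_{Qp^mk}$ has only a handful of elements. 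The surviving contributions modulo~$p$ come from divisors $d\mid k$ (contributing $(-1)^\wt d$) and divisors $dQp^m$ with $d\mid k$ (contributing $B\cdot k/d$); all terms carrying a nontrivial $p$-power vanish modulo~$p$. One then does an explicit case analysis on $B\pmod{4}$ and on $p\mid B$ versus $p\nmid B$, showing that some linear combination of $c(4\mathcal{Q}),c(8\mathcal{Q}),c(4\ell\mathcal{Q}),c(8\ell\mathcal{Q})$ is a $p$-adic unit.

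Your second proposed mechanism---that the ``remaining representations organise into orbits on which $Q\equiv-1\pmod p$ forces cancellation''---is not how the argument goes, and I do not see how to make it work: the condition $Q\equiv-1\pmod p$ plays no role in the analysis of $t(n)$ itself (it is used only for the $\sigma$-term). What actually controls $t(Qp^mk)\bmod p$ is the $p$-divisibility of the individual summands $\min\{|b|,|Ba|\}$, and for that one needs $k$ small enough that every divisor of $k$ is visible and computable. Your ``distinguished $|a|=1$'' idea survives in spirit (in the paper it is the $d=1$ or $d=2$ term that often carries the weight), but the surrounding cancellation claim is not there and would have to be replaced by the explicit case analysis.
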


We split the proof of Proposition \ref{pro:main} according to the parity of the prime $p$. Also, we give the proof for $\epsilon =0$. The case $\epsilon=1$ 
is analogous and is proven in the author's Ph.D. thesis.

\subsection{\textsc{\textbf{Congruences modulo odd primes}}}

From Proposition \ref{pro:minimum} we know that the coefficient $c(n)$ is essentially given by the sum over the divisors $b$ of $n$ that satisfy
\begin{equation}\label{mainCong}
b+B\frac{n}{b}\equiv 2A \pmod{2B}.
\end{equation}
The contribution of $E_2$ is irrelevant since $\sigma(Qp^m n)\equiv 0 \pmod{p}$ for any integer $n$ and any prime $Q\equiv-1 \pmod{p}$.
To simplify the notation, for any fixed $A$, $B$, and $n$ we introduce the counting function $\Psi_n$  defined by
\begin{equation}
\Psi_n(b):=\begin{cases}
1, \mbox{  if $\pm b$ satisfies \eqref{mainCong},} \\
0, \mbox{ otherwise.}
\end{cases}
\end{equation}
In order to prove the odd case of Proposition \ref{pro:main}, let $\cQ:=Qp^m$ and  set
\begin{align} \label{eq:c}
c_1&:=c(4\cQ)\equiv \sum_{d|4}  \rl{(-1)^{\wt} d \Psi_{4\cQ}(d)+  B \frac{4}{d} \Psi_{4\cQ}(d\cQ)     }      \pmod{p},\nonumber\\
c_2&:=c(8\cQ)\equiv \sum_{d|8} \rl{(-1)^{\wt} d \Psi_{8\cQ}(d)+ B \frac{8}{d} \Psi_{8\cQ}(d\cQ)     }      \pmod{p},\nonumber\\
c_3&:=c(4\ell\cQ)\equiv \sum_{d|4\ell}  \rl{(-1)^{\wt} d \Psi_{4\cQ \ell}(d)+  B \frac{4\ell}{d} \Psi_{4\cQ}(d\cQ)   }      \pmod{p},\nonumber \\
c_4&:=c(8k\cQ)\equiv \sum_{d|8\ell}  \rl{(-1)^{\wt} d \Psi_{8\cQ \ell}(d)+  B \frac{8\ell}{d} \Psi_{8\cQ}(d\cQ)     }      \pmod{p},
\end{align}
where $\ell\neq p$ denotes a prime smaller enough respect to $Q$ such that $\ell\equiv A \pmod{B}$.
\begin{remark}
The congruence properties of $c_1,\cdots,c_4$ stated in \eqref{eq:c} come directly from their own definition and from the fact that $Q$ is a large prime. To be more precise, note that $c(n)$ is essentially the sum of the ``small'' divisors of $n$. In particular, consider the coefficients $c(Qp^m N)$, where $N$ is as in the definition of the $c_i$s. If $Q p^n h$ divides $Qp^m N$ (with $h|N$ and $n\geq m$), since $Q$ is large then its contribution to the value of $c(Qp^m N)$
is given by $Np^{m-n}/h$, which is $0$ modulo $p$ unless $n=m$. For this reason we will consider $\cQ=Qp^m$ as a ``prime'' factor of $N\cQ$.
\end{remark}
We split the proof of Proposition \ref{pro:main} in three cases, according to the residue classes of $B \pmod{4}$. The special case $B \equiv 0 \pmod{p}$ will be treated separately.

\subsection{Case 1: $p$ divides $B$}
Since $B\equiv 0 \pmod{p}$ we have
\begin{align*}
c_3-c_1 &\equiv (-1)^\wt \rl{ \sum_{d|4\ell}  d \Psi_{4\cQ k}(d) -\sum_{d|4}  d \Psi_{4\cQ}(d) } \\
 &\equiv (-1)^\wt \sum_{d|4}  d\ell \Psi_{4\cQ k}(d\ell)  \pmod{p}.
\end{align*}
By definition of $\ell$, $\Psi_{4\cQ \ell}(d\ell)=1$ for $d=2$ and might equal $1$ for $d=4$. If $\Psi_{4\cQ \ell}(4\ell)=0$ then 
\[
c_3-c_1\equiv (-1)^\wt 2\ell \not\equiv 0\pmod{p}.
\]  
If $\Psi_{4\cQ \ell}(4\ell)=1$ then $6A\equiv 0 \pmod{B}$, therefore $B=6$ and $p=3$. In this particular case 
\[
c_2-c_1\equiv (-1)^{\wt}4 \not\equiv 0 \pmod{3}.
\]
From now on we can assume $(p,B)=1$.

\subsubsection{Case 2: $B$ odd} 
If $B=3$ then $c_1 \equiv (-1)^\wt 2+6 \not\equiv 0 \pmod{p}$. Assuming $B$ to be odd and larger than $3$ forces $b$ and $\frac{n}{b}$ to have the same parity, which implies
\begin{align*}
c_1&\equiv (-1)^k 2\Psi_{4\cQ}(2)+2B \Psi_{4\cQ}(2\cQ) \\
c_2&\equiv (-1)^k \rl{2\Psi_{8\cQ}(2)+ 4\Psi_{8\cQ}(4) } +4B \Psi_{8\cQ}(2\cQ) +2B \Psi_{8\cQ}(4\cQ).
\end{align*}

If $\Psi_{4\cQ}(2\cQ)=1$ then $\Psi_{8\cQ}(2\cQ)=1$ and 
$\Psi_{8\cQ}(4\cQ)=0$. The first claim follows directly from the definition. 
To prove the second one, assume $\Psi_{8\cQ}(4\cQ)=1=\Psi_{8\cQ}(2\cQ)$, then 
either $6\cQ$ or $2\cQ \equiv 0 \pmod{2B}$, which is impossible since $(B,\cQ)=1$. This implies 
\begin{align*}
c_1&\equiv (-1)^\wt 2\Psi_{4\cQ}(2)+2B  \\
c_2-c_1&\equiv (-1)^\wt 4 \Psi_{8\cQ}(4) +2B.
\end{align*}
Again, $\Psi_{4\cQ}(2)+\Psi_{8\cQ}(4) \in \{ 0 ,1 \}$, therefore either $c_1$ or $c_2-c_1 \not\equiv 0 \pmod{p}$. 

If $\Psi_{4\cQ}(2\cQ)=0$ then 
\[
c_3\equiv (-1)^\wt 2\rl{ \Psi_{4\cQ}(2)+\ell  } +2B\Psi_{4\ell\cQ}(2\ell\cQ).
\]
The Chinese reminder theorem and Dirichlet's prime number theorem guarantee the existence of $\ell$ such that $c_3\not\equiv 0\pmod{p}$.

\subsubsection{Case 3: $B\equiv 0 \pmod{4}$}
Since $B \equiv 0\pmod{4}$, for each $n$, $\Psi_{n}(b)=0$ unless $b\equiv 2 \pmod{4}$. In particular, we have
\begin{align*}
c_1 & \equiv (-1)^\wt 2\Psi_{4\cQ}(2)+2B\Psi_{4\cQ}(2\cQ)     \pmod{p}, \\
c_2 & \equiv (-1)^\wt 2\Psi_{8\cQ}(2)+4B\Psi_{8\cQ}(2\cQ)     \pmod{p}.
\end{align*}
If $\Psi_{4\cQ}(2\cQ)=1$ $(=\Psi_{8\cQ}(2\cQ))$, then 
\[
c_2-c_1\equiv 2B \not\equiv 0 \pmod{p}.
\]
Otherwise, if $\Psi_{4\cQ}(2\cQ)=0$, then 
\[
c_3\equiv (-1)^\wt(2\Psi_{4\ell\cQ}(2) +2\ell)+2B\Psi_{4\ell\cQ}(2\ell\cQ).
\]
As in the previous case we can choose $\ell$ such that $c_3\not\equiv 0 \pmod{p}$.

\subsubsection{Case 4: $B\equiv 2 \pmod{4}$}
We treat  the case $B=6$ and $B=10$ separately. In the following lemma we describe the properties of $\Psi$ in this setting.
\begin{lemma} \label{lem:psiprop}
Let $\cQ >5$ be a prime number and $x\in \{ 1, \cQ \}$, then the following is true.
\begin{enumerate}
\item If  $B=6$ then
\[
\Psi_{4\cQ}(2x)=\Psi_{4\cQ}(4x)=\Psi_{8\cQ}(2x)=\Psi_{8\cQ}(8x)=1.
\]
\item If  $B=10$
 \[
 \Psi_{8\cQ}(2x)=\Psi_{8\cQ}(8x)=\Psi_{4\cQ}(2x)\neq \Psi_{4\cQ}(4x).
 \]
  \item If $B \notin \{ 6,10 \}$, $B \equiv 2 \pmod{4}$
    \begin{enumerate}
   \item $\Psi_{4\cQ}(2x)+ \Psi_{4\cQ}(4x) \in \{ 0,1 \}$.
   \item $\Psi_{8\cQ}(2x)+\Psi_{8x}(8x) \in \{0,1\}$.
   \item $\Psi_{4\cQ}(4x)+\Psi_{8x}(8x) \in \{0,1\}$.
   \end{enumerate}
\end{enumerate}
\end{lemma}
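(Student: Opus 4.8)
The plan is to reduce the entire statement to a single congruence attached to each admissible divisor and then exploit that the relevant target set is stable under negation. Spelling out \eqref{mainCong} for both $+b$ and $-b$, one has $\Psi_n(b)=1$ precisely when
\[
g_n(b):=b+B\tfrac{n}{b}\equiv \pm 2A \pmod{2B}.
\]
Throughout I would use only that $\cQ>5$ is a prime with $\gcd(\cQ,B)=1$, so that $\cQ$ is odd and a unit modulo $2B$; in particular $B\cQ\equiv B$ and $2B\cQ\equiv 0 \pmod{2B}$, and each $b\in\{2x,4x,8x\}$ with $x\in\{1,\cQ\}$ genuinely divides the corresponding $n\in\{4\cQ,8\cQ\}$.

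The key observation, which makes part (3) independent of $A$, is the following. If $\Psi_{n_1}(b_1)=\Psi_{n_2}(b_2)=1$, then $g_{n_1}(b_1)$ and $g_{n_2}(b_2)$ both lie in the two-element set $\{2A,-2A\}\pmod{2B}$; as this set is stable under negation, necessarily $g_{n_1}(b_1)\equiv\pm g_{n_2}(b_2)\pmod{2B}$, and reducing modulo $B$ (where $g_n(b)\equiv b$) this forces $b_1\equiv\pm b_2\pmod{B}$. Contrapositively, to prove $\Psi_{n_1}(b_1)+\Psi_{n_2}(b_2)\in\{0,1\}$ it is enough to check the $A$-free incompatibility $b_1\not\equiv\pm b_2\pmod{B}$. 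For $B\notin\{6,10\}$, $B\equiv 2\pmod 4$, I would simply run this for the three pairs of (3): pair (a) needs $B\nmid 2x$ and $B\nmid 6x$; pair (b) needs $B\nmid 6x$ and $B\nmid 10x$; pair (c) needs $B\nmid 4x$ and $B\nmid 12x$. Since $\gcd(x,B)=1$, a coincidence would force $B\mid 6$ or $B\mid 10$, i.e. $B\in\{2,6,10\}$; by hypothesis $B\notin\{6,10\}$, and the boundary value $B=2$ (trivial shadow) is excluded, so all three inequalities of (3) hold.

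For the exceptional values $B=6$ and $B=10$ the incompatibility above breaks down and I would argue by direct evaluation of $g_n(b)\pmod{2B}$, using that $\cQ$ is an odd unit so its class ranges over the admissible residues. For $B=6$ one has $\{2A,-2A\}\equiv\{2,10\}\pmod{12}$ (as $A\equiv\pm1\pmod 6$), and a short computation gives $g_{4\cQ}(2x),g_{4\cQ}(4x),g_{8\cQ}(2x),g_{8\cQ}(8x)\in\{2,10\}$ for both $x\in\{1,\cQ\}$, which yields the four equalities $=1$ of part (1). For $B=10$ the set $\{2A,-2A\}\pmod{20}$ equals one of the two negation-stable pairs $\{2,18\}$ or $\{6,14\}$, according as $A\equiv\pm1$ or $A\equiv\pm3\pmod{10}$, and these pairs partition the even classes $\{2,6,14,18\}$. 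Computing the four values shows that $g_{4\cQ}(2x)$, $g_{8\cQ}(2x)$ and $g_{8\cQ}(8x)$ always land in one and the same pair while $g_{4\cQ}(4x)$ lands in the complementary pair; since $A$ selects exactly one of the two pairs, this gives the chain $\Psi_{8\cQ}(2x)=\Psi_{8\cQ}(8x)=\Psi_{4\cQ}(2x)$ together with $\Psi_{4\cQ}(2x)\neq\Psi_{4\cQ}(4x)$ of part (2).

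The main obstacle is precisely the uniformity in the $B=10$ case: one must verify, across all eight residue classes of $\cQ\pmod{20}$ and for both $x\in\{1,\cQ\}$, that $g_{4\cQ}(2x)$ and $g_{4\cQ}(4x)$ never fall into the same block of the partition $\{2,18\}\sqcup\{6,14\}$, so that $\Psi_{4\cQ}(2x)\neq\Psi_{4\cQ}(4x)$ holds regardless of $A$. This is the one point where the special arithmetic of $B=10$ (rather than a clean $A$-independent contradiction) is genuinely used, and checking it is the only real bookkeeping in the proof.
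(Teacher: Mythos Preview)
The paper states this lemma without proof and immediately uses it, so there is nothing to compare your argument against directly; what follows is an assessment of your proposal on its own merits.

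Your proof is correct. The key step for part~(3) --- observing that $g_n(b)\equiv b\pmod B$, so that $\Psi_{n_1}(b_1)=\Psi_{n_2}(b_2)=1$ forces the $A$-free condition $b_1\equiv\pm b_2\pmod B$ --- is exactly the right reduction and makes the generic case transparent: the three pairs in~(3) become the divisibility constraints $B\mid 2x,6x$ (resp.\ $6x,10x$; resp.\ $4x,12x$), which for $\gcd(x,B)=1$ pin $B$ to $\{2,6,10\}$. For $B=6$ and $B=10$ you correctly note that this incompatibility breaks down and resort to direct evaluation of $g_n(b)\pmod{2B}$; I have checked your claims and they hold for every admissible residue of $\cQ$ and both choices of $x$, including the uniform ``partition'' statement you flag as the main bookkeeping for $B=10$.

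Your remark about $B=2$ is also on point: part~(3a) is genuinely false for $B=2$ (one computes $\Psi_{4\cQ}(2x)=\Psi_{4\cQ}(4x)=1$), so the hypothesis in the lemma should really read $B\notin\{2,6,10\}$. This is a small imprecision in the paper's statement, not a gap in your argument; your justification ``trivial shadow'' is not quite the right reason, but the exclusion itself is necessary.
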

If $B=6$ Lemma \ref{lem:psiprop} implies 
\[
c_1\equiv (-1)^\wt 6 +18 \not\equiv 0\pmod{p}.
\]

Assume $B=10$. If $\Psi_{4\cQ}(2)=1$ then by Lemma \ref{lem:psiprop} 
\begin{align*}
c_1&\equiv (-1)^\wt 2 +10(2\Psi_{4\cQ}(2\cQ)+\Psi_{4\cQ}(4\cQ)) \\
c_2&\equiv (-1)^\wt 10 +10(4\Psi_{8\cQ}(2\cQ)+\Psi_{8\cQ}(8\cQ)).
\end{align*}
If $\Psi_{4\cQ}(2\cQ)=1$ then $c_2\not\equiv 0\pmod{p} $ unless $p=3$ and $\wt=0$, in which case $c_1\equiv 22 \not\equiv 0\pmod{p}$.
On the other hand, if $\Psi_{4\cQ}(2\cQ)=0$ then $c_2 \equiv \pm 10\not\equiv 0\pmod{p}$, since $(p,10)=1$.
Finally, if $\Psi_{4\cQ}(2)=0$ then
\begin{align*}
c_1&\equiv (-1)^\wt 4 +10(2\Psi_{4\cQ}(2\cQ)+\Psi_{4\cQ}(4\cQ)) \\
c_2&\equiv 10(4\Psi_{8\cQ}(2\cQ)+\Psi_{8\cQ}(8\cQ)).
\end{align*}
If $\Psi_{4\cQ}(2\cQ)=1$ then $c_2\equiv 50\not\equiv 0 \pmod{p}$. Otherwise, using the same argument as in the previous cases, it is possible to find a prime $\ell$ such that either $c_1$
or $c_3\not\equiv 0 \pmod{p}$.

Finally, assume $B\notin \{ 6,10\}$. Since $B\equiv 2 \pmod{4}$ then $\Psi_n(b)=0$ unless $b$ is even and $\frac{b}{2}$ and $\frac{n}{b}$ have opposite parity. As a consequence, we have
\begin{align*}
c_1 & \equiv (-1)^\wt \rl{ 2\Psi_{4\cQ}(2)+ 4\Psi_{4\cQ}(4)    }+B\rl{2 \Psi_{4\cQ}(2\cQ)  +  \Psi_{4\cQ}(4\cQ)}    \pmod{p}, \\
c_2 & \equiv (-1)^\wt \rl{ 2\Psi_{8\cQ}(2)+ 8\Psi_{8\cQ}(8)    }+B\rl{4 \Psi_{8\cQ}(2\cQ)  +  \Psi_{8\cQ}(8\cQ)}   \pmod{p}.
\end{align*}
If $\Psi_{4\cQ}(2\cQ)=1$ then 
\begin{align*}
c_1 & \equiv (-1)^\wt \rl{ 2\Psi_{4\cQ}(2)+ 4\Psi_{4\cQ}(4)    }+2B   \pmod{p}, \\
c_2 & \equiv (-1)^\wt \rl{ 2\Psi_{8\cQ}(2)+8\Psi_{8\cQ}(8)    }+4B   \pmod{p},
\end{align*}
therefore either $c_1$, or $c_2$, or $c_2-c_1 \not\equiv 0 \pmod{p}$.

If $\Psi_{4\cQ}(2\cQ)=0$ then as before we can determine $\ell$ such that $c_3\not\equiv 0 \pmod{p}$.

\subsection{\textbf{\textsc{ Congruences modulo $2$}}}

The proof of Proposition \ref{pro:main} for $p=2$ is analogous to the proof in the case of odd $p$, therefore we do not prove it here. We only mention that in this case one can show that a linear combination of 
\begin{align*}
c_1 &:= 2c\rl{ Q2^m } \\
c_2 &:=2 c\rl{ Q2^m \ell  }
\end{align*}
is congruent to $2 \pmod{4}$. A detailed proof can be found in the author's  Ph.D. thesis.

\section{\textsc{Proof of the main results}}
\label{Proof}

In this section we prove Theorem \ref{thm2} and Corollary \ref{cor1}. 

\begin{proof}[{\bf Proof of Theorem \ref{thm2}}] Assume by contradiction that there exists a weakly holomorphic modular form $g$ and a prime $p$ such that $f_{\frac{A}{B},\epsilon,\wt}\Theta_{\frac{A}{B},\epsilon,\wt} \equiv g \pmod{p}$. 
Proposition \ref{pro:Treneer} applied to $g$ implies that there exists infinitely many primes $Q$ such that 
\[
c(Qp^m n)\equiv 0 \pmod{p}
\]
for any  integer $n$ coprime to $Qp$. This contradict Proposition \ref{pro:main}.  
\end{proof}

\begin{proof}[{\bf Proof of Corollary \ref{cor1}}]
Since $H$ has the same non-holomorphic part as $f_{\frac{A}{B},\epsilon,\wt}$, then the difference $m:=H-f_{\frac{A}{B},\epsilon,\wt}$ is a weakly holomorphic modular form of weight $\frac12$ and level $N$. 
\begin{enumerate}[(i)]
\item Since $\nu_p(H)=j<0$, then $\nu_p(m)=\nu_p(H-f_{\frac{A}{B},\epsilon,\wt})=\nu_p(H)$. In particular $\nu_p(p^{-j}m)=0$. Therefore, 
\[
\nu_p(p^{-j}H-p^{-j}m)=\nu_p(p^{-j}f_{\frac{A}{B},\epsilon,\wt})=-j=\nu_p(p^{-j}m)-j, 
\]
in other words $p^{-j}H\equiv p^{-j}m \pmod{p^{-j}}$.
\item Conversely, assume that there exists a weakly holomorphic modular form $g$   such that 
\begin{equation}\label{eq:assump}
p^{-j}H\equiv g \pmod{p^\ell},
\end{equation}
and assume by contradiction that $j>-\ell$. Note that equation \eqref{eq:assump} implies that $\nu_p(g)=0$. In particular, 
$\nu_p(f_{\frac{A}{B},\epsilon,\wt}+m-p^j g)=\nu_p(H-p^j g) \geq j+\ell$, i.e., $f_{\frac{A}{B},\epsilon,\wt}\equiv p^j g - m \pmod{p^{j+\ell}}$. To conclude the proof it is enough to use Theorem \ref{thm2}.
\end{enumerate}
\end{proof}

\bibliographystyle{elsarticle-num}



\end{document}